\pgfplotsset{compat=1.18}
\definecolor{dodgerblue}{rgb}{0.118,0.565,1}
\definecolor{MITRed}{HTML}{750014}
\theoremstyle{plain}
\newtheorem{Prop}{Proposition}
\theoremstyle{definition}
\newtheorem{Rmk}{Remark}
\newtheorem{Ass}{Assumption}
\crefname{Ass}{Assumption}{Assumptions}
\crefname{Prop}{Proposition}{Propositions}
\crefname{lem}{Lemma}{Lemmas}
\newtheorem{lem}{Lemma}
\newcommand{\eg}{\textit{e.g.}}
\newcommand{\ie}{\textit{i.e.}}
\newcommand{\set}[1]{\mathcal{#1}}
\newcommand{\dom}{\mathop{\rm{dom}}}
\newcommand{\ubar}[1]{\underaccent{\bar}{#1}}
\begin{document}
\begin{bibunit}[abbrvnat]
\title{The Value of Storage in Electricity Distribution: The~Role~of~Markets}

\author{
Dirk Lauinger, Deepjyoti Deka, Sungho Shin
}
\date{
  \small
  Massachusetts Institute of Technology \\
  \href{mailto:lauinger@mit.edu}{lauinger@mit.edu},  \href{mailto:deepj87@mit.edu}{deepj87@mit.edu},  \href{mailto:sushin@mit.edu}{sushin@mit.edu}  
}
\maketitle

\begin{abstract}
  Electricity distribution companies deploy battery storage to defer grid upgrades by reducing peak demand. In deregulated jurisdictions, such storage often sits idle because regulatory constraints bar participation in electricity markets. Here, we develop an optimization framework that, to our knowledge, provides the first formal model of market participation constraints within storage investment and operation planning. Applying the framework to a Massachusetts case study, we find that market participation delivers similar savings as peak demand reduction. Under current conditions, market participation does not increase storage investment, but at very low storage costs, could incentivize deployment beyond local distribution needs. This might run contrary to the separation of distribution from generation in deregulated markets. Our framework can mitigate this concern by identifying investment levels appropriate for local distribution needs.
\end{abstract}


\section{Introduction}
\subsection{Background and motivation}

In the 1980s and 1990s, deregulation was en vogue. By breaking up established monopolies, the hope was to increase market efficiency by stimulating competition across various industries, including electricity. In the US, for example, the state of Massachusetts separated electricity generation from distribution and transmission, which were to be kept as regulated monopolies~\citep[ch.~164, sec.~1(d)]{ma1997act}. The rationale for the separation was to increase competition in generation while avoiding any \emph{``cross-subsidization of competitive businesses from regulated businesses and discriminatory policies affecting access to distribution and transmission networks upon which all competitive suppliers depend''}~\citep[p.~12]{joskow2008lessons}. Deregulation occurred at  different rates in different jurisdictions~\citep{borenstein2015restructuring}. To our knowledge, in 13~US states (CT, DE, IL, ME, MD, MA, NH, NJ, NY, OH, PA, RI, TX) and the District of Columbia, investor-owned utilities are restricted to owning either generation assets or transmission and distribution assets. In the European Union, deregulation led to distribution system operators being allowed to own storage only if they cannot contract it from third parties, and prohibited from using storage for market participation~\citep[art.~36(2)(b)]{eu2019storage}. 

In the context of the separation between generation and distribution, storage investment and operation within electricity markets warrant a closer look. Storage has generally been considered generation, but can be considered distribution if it serves grid reliability or defers grid investments such as substation or line upgrades. In the latter case, utilities have been allowed to own and operate storage for distribution needs. Such needs are generally infrequent, especially if they arise from reliability during extreme events. Storage built to address grid reliability thus experiences low utilization, on the order of one discharge cycle per month in some cases~\citep{or2024storage}, and foregoes potential revenue from participating in electricity markets. Allowing for market participation would improve storage economics but risk incentivizing investments that go beyond serving local distribution needs, which would run contrary to the separation of distribution from generation.

Here, we study policy designs that allow for utility-owned storage to participate in wholesale electricity markets while constraining the level of storage investment to local distribution needs. To identify permissible storage investments, we design a model that calculates optimal degrees of investment in grid assets, such as substation and line upgrades, and non-grid resources, such as storage and backup generation. Crucially, this investment model incorporates legal \emph{market participation constraints}, which limit the generation from non-grid resources to serving local distribution needs. Next, we quantify the economic gains from participation in arbitrage and capacity markets, and analyze if the gains lead to storage investments that go beyond meeting local distribution needs.

Our study is timely as US utility spending on distribution grids has increased by 45\% from the year 2018 through 2023 and now stands at over \$50~billion annually~\citep{eia2024gridinvestment}. In addition, some deregulated states, \ie, Maryland (\citeyear[p.~2]{pscmd2019storage_pilot}) and New York~(\citeyear[pp.~12--13]{nypsc2021storage}), have recently allowed market participation, hoping to reduce costs and recognizing that utility-owned distribution-grid storage is unlikely to exert substantial market power. In fact, utility-owned distribution storage accounted for less than~1\% of total US generation capacity as of December~2025, see Appendix~\ref{apx:US_storage}. On the other side of the Atlantic, the European Commission recommends exploring the full flexibility potential of energy storage in distribution grids~\citep[\S~5]{eu2023storage} and entertains proposals to create a market for local services that would help alleviate distribution grid constraints \citep[Art.~29, 34, 40, 41, 44]{acer2025drnc}.
Our work provides a tool that enables jurisdictions to assess whether proposed storage investments meet local distribution needs and to quantify economic gains from market participation.

\subsection{Research questions and contributions}

We contribute to the market participation discussion by answering three research questions:
\begin{enumerate}
    \item How to model \emph{market participation constraints} in storage operation and investment planning?
    \item How do profits from market participation compare to savings from reducing peak demand?
    \item Would market participation generate storage investments that go beyond distribution needs, and if so, how can this be detected?
\end{enumerate}

In answer to Question~1, we model market participation constraints mathematically by limiting the supply from non-grid resources, such as storage, to the shortfall of grid capacity relative to electricity demand. We integrate these constraints in a mixed-integer linear optimization problem that determines distribution grid investment and operating decisions. To our knowledge, this is the first reported approach to formulate such constraints. We address the remaining questions by applying the formulation to a Massachusetts case study. 

Question~2 helps assess if profits from market participation are sufficient to justify contracts with third-party storage or policy changes, considering that regulators have been allowing distribution companies to own storage to defer grid investments. In our case study, the profits amount to between~50\% and~100\% of the savings from reducing peak demand.

Question~3 examines the trade-off between market power concerns and efficient storage utilization. While current utility-owned distribution-grid storage is considered too small to yield much market power, allowing for market participation could incentivize storage investments that exceed local distribution needs and increase market power over time. Our optimization problem can guide investment planning by limiting storage capacity to local grid needs. The trade-off can then be navigated by (\emph{i})~regulators authorizing and incentivizing market participation, while limiting storage investment to local needs; (\emph{ii})~utilities contracting third-party storage; or (\emph{iii})~creating a market for distribution-grid services.

\subsection{Prior work}
In the electricity industry, investment planning is typically done by solving capacity expansion problems~\citep{anderson1972cep}. Existing models~\citep{genx,pypsa} adopt a social planner perspective, in which all resources are jointly controlled by a single agent. Market participation constraints limit coordination between assets and are not included in these models. In distribution grids specifically, engineering studies have focused on reducing peak demand~\citep{martin2019peak, martinez2024distributed} and balancing intermittent generation~\citep{yi2022optimal}, discussing market participation constraints but not modeling them explicitly. A survey among distribution companies confirms that storage investments are, in practice, evaluated primarily based on their ability to defer capital investments \citep[p.~7]{keen2022distribution}, rather than on revenues from market participation.

In terms of operating strategies, storage control relies on heuristic decision rules that limit market participation if storage may be needed for grid support, \eg, by limiting the time or state-of-charge available for market participation~\citep{balducci2019nantucket,or2024storage, lumen2024scaleup}. Such heuristics are overly restrictive when market incentives and grid needs align, enabling storage to meet both objectives without compromising one over the other. Our model quantifies the trade-off between both objectives, which we find to be negligible in our case study.

The market value of storage has previously been analyzed by operations research studies, which show that the joint optimization of participating in multiple markets, \eg, forward, real-time, and reserve markets, yields greater profits than participating in each market separately \citep{lohndorf2023value, lauinger2025storage}. In this vein, \cite{bae2025stacking} explicitly leave the value of peak shaving, which stems from reduced investments in electricity supply, transmission, and distribution, for future work. While the value from reduced investments in electricity supply~\citep{peng2024renewable} and transmission~\citep{qi2015joint} can be captured through capacity markets and locational marginal pricing, respectively, we are not aware of any market mechanism that captures the value from reduced investments in distribution. We address this gap by developing a capacity expansion model with market participation constraints, which allows us to distinguish the market value of storage from its nonmarket value. The distinction is important because it allows us to quantify the benefits of enabling and incentivizing storage to participate in wholesale markets~\citep{wu2023distributed} in addition to providing distribution-grid services.

Similar patterns of market and nonmarket value emerge at different scales. \cite{kaps2025residential} find that German residential customers value the sustainability and autarky benefits from storage about twice as much as the benefits from selling electricity. For distribution grids, we find a similar ratio of~2:1 of the nonmarket value from storage to the market value from arbitrage, which shifts to 1:1 when allowing for capacity market participation. Market profits thus constitute only a fraction of the full value of distributed storage, which motivates the need for optimization models that consider both market and nonmarket value streams. By explicitly incorporating market participation constraints in a capacity expansion problem, we provide such a model.

\subsection{Structure}
The paper unfolds as follows. Section~\ref{sec:model} formulates a mathematical optimization model for storage investment and operation problems with market participation constraints, answering Question~1. Section~\ref{sec:cs} performs numerical experiments to answer Questions~2 and~3. Section~\ref{sec:conclusion} concludes. Appendix~\ref{apx:data} presents the data used in the experiments, Appendix~\ref{sec:opt_formulation} presents a mixed-integer linear reformulation of the optimization problem, and Appendix~\ref{apx:results} lists detailed experimental results.

\paragraph{Notation.} We show vectors in boldface and refer to exogenous electricity demand as electric load.
\section{Problem description and optimization model}\label{sec:model}
Consider a distribution company planning its capital investments. We model the investment decisions in an optimization problem that accounts for market participation constraints.

\subsection{Investment cost}
Over a horizon of $N$ periods, \eg, years, the company decides how much capacity $x_{rn}$ of resource~$ r \in \set{R} = \{\mathrm{b}, \mathrm{g}, \mathrm{s}\}$ to add at the beginning of period~$n$. Investing in resource~($\mathrm{g}$) means expanding the connection to the electricity grid, for example, by adding a distribution line or upgrading a substation. Resources~(b) and~(s) stand for backup generation and storage, respectively. We assume that the cost of investing in any resource~$r \in \mathcal{R}$ in any period~$n$ in a set $\set{N}$, $\vert \set{N} \vert = N$, of planning periods is given by the following extended real-valued function that is
mixed-integer linear representable
\begin{equation}
    c_{rn}(x_{rn}) \coloneq \begin{cases}
        p_{rn} x_{rn} + p^0_{rn} & \mathrm{if}~~\ubar x_r \leq x_{rn} \leq \bar x_r, \\
        0 & \mathrm{if}~~x_{rn} = 0, \\
        \infty & \mathrm{otherwise,}
    \end{cases}
\end{equation}
where $\ubar x_r$ and $\bar x_r$ are lower and upper bounds on admissible investments, and $p_{rn}$ and $p^0_{rn}$ are nonnegative coefficients. The installed capacity of any non-grid resource~$r\in\set{R} \setminus \{\mathrm{g}\}$ at the beginning of period~$n$ is 
\begin{equation}
    \bar x_{rn}(\bm x_r) \coloneq \sum_{i \in \set{I}_r} x^0_{rni} + \sum_{i = \ubar n(n,N_r)}^n x_{ri},
\end{equation}
where $\ubar n(n, N) \coloneq \max\{1, n - N + 1\}$ is an auxiliary function and $\set{I}_r$ is an index set containing preinstalled units with power capacity~$\bm x^0_{rn}$ and lifetime~$N_r$. We introduce a parameter $c \in \set{C} \coloneq \{0,1\}$ to distinguish regular operations ($c = 0$) from contingency operations ($c = 1$). In contingencies, we discount the total installed capacity of grid resources by the largest individual unit, hence
\begin{equation}
    \bar x_{\mathrm{g}nc}(\bm x_\mathrm{g}) \coloneq \sum_{i \in \set{I}_\mathrm{g}} x^0_{\mathrm{g}ni} + \sum_{i = \ubar n(n,N_\mathrm{g})}^n x_{\mathrm{g}i} - c \cdot \max\left\{ 
    \max_{i \in \set{I}_\mathrm{g}}\left\{ x^0_{\mathrm{g}ni} \right\}, \,
    \max_{i \in \{ \ubar n(n, N_\mathrm{g}), \ldots, n \}} x_{\mathrm{g}i}
    \right\},
  \end{equation}
where $\set{I}_\mathrm{g}$ is defined similarly to $\set{I}_\mathrm{b}$ and $\set{I}_\mathrm{s}$. The following proposition characterizes the installed capacity as a function of new investments.

\begin{Prop}\label{prop:xbar}
    The capacity functions $\bar x_{rn}$ and $\bar x_{\mathrm{gnc}}$ are nondecreasing concave piecewise linear for all $r \in \set{R} \setminus \{\mathrm{g}\}$, $n \in \set{N}$, and $c \in \set{C}$.
\end{Prop}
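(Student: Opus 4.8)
The plan is to verify the three claimed properties—monotonicity, concavity, and piecewise linearity—by exploiting the fact that each is preserved under the operations that build these capacity functions, and to isolate the only genuinely nontrivial case. First I would dispatch the easy cases. For every non-grid resource $r \in \set{R} \setminus \{\mathrm{g}\}$, and for the grid resource in regular operation ($c = 0$), the function $\bar x_{rn}$ (resp.\ $\bar x_{\mathrm{g}n0}$), viewed as a function of the decision vector $\bm x_r$ (resp.\ $\bm x_\mathrm{g}$), is the sum of a constant—the preinstalled capacity $\sum_{i \in \set{I}_r} x^0_{rni}$—and the terms $x_{ri}$ for $i$ ranging over $\{\ubar n(n, N_r), \ldots, n\}$, each entering with coefficient $+1$. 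Such a function is affine with nonnegative coefficients, hence simultaneously concave, piecewise linear (a single piece), and nondecreasing in the componentwise order. This settles the proposition except for the contingency case $c = 1$.

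The remaining work concerns $\bar x_{\mathrm{g}n1}$, which differs from the affine case by the subtracted term $\max\{C, \max_{i} x_{\mathrm{g}i}\}$, where $C \coloneq \max_{i \in \set{I}_\mathrm{g}} x^0_{\mathrm{g}ni}$ is a constant and the inner maximum runs over $i \in \{\ubar n(n, N_\mathrm{g}), \ldots, n\}$. For concavity and piecewise linearity I would rewrite this subtracted term as a pointwise maximum of finitely many affine functions of $\bm x_\mathrm{g}$: the constant map $C$ together with the coordinate maps $\bm x_\mathrm{g} \mapsto x_{\mathrm{g}i}$. A pointwise maximum of affine functions is convex and piecewise linear, so its negation is concave and piecewise linear; adding the affine part $\sum_{i \in \set{I}_\mathrm{g}} x^0_{\mathrm{g}ni} + \sum_{i = \ubar n(n,N_\mathrm{g})}^n x_{\mathrm{g}i}$ preserves both properties, since an affine function is itself concave and piecewise linear and these classes are closed under addition.

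The main obstacle is monotonicity of $\bar x_{\mathrm{g}n1}$, because here an increasing contribution (the linear sum) and a decreasing one (the negated maximum) compete. The key observation I would use is that the maximum term is $1$-Lipschitz with respect to a change in a single coordinate: increasing any $x_{\mathrm{g}j}$ by $\delta > 0$ while holding the other coordinates fixed raises $\max\{C, \max_i x_{\mathrm{g}i}\}$ by at most $\delta$. Meanwhile, if $j$ lies in the active range $\{\ubar n(n,N_\mathrm{g}), \ldots, n\}$ the linear sum increases by exactly $\delta$, and if $j$ lies outside it the whole expression is unchanged. In either case the net increment is nonnegative, so $\bar x_{\mathrm{g}n1}$ is nondecreasing. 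Combining the cases completes the proof; I expect the write-up to be short, with the only care needed in phrasing the Lipschitz step (equivalently, observing that $\sum_i x_i - \max\{C, \max_i x_i\}$ reduces either to the sum of all but the largest active variable or, when $C$ binds, to a sum with unit coefficients, each of which is manifestly nondecreasing).
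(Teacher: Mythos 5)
Your proof is correct, and on concavity and piecewise linearity it coincides with the paper's argument: both of you dispatch the non-grid and $c=0$ cases as affine, and for $c=1$ both write the subtracted contingency term as a pointwise maximum of affine functions (convex, piecewise linear), so that its negation plus the remaining affine part is concave piecewise linear. Where you genuinely diverge is the monotonicity step. Writing $C$ for the largest preinstalled unit and suppressing constants, the paper proves monotonicity by expressing the whole function as a minimum of nondecreasing linear functions,
\begin{equation*}
    \sum_{i} x_{\mathrm{g}i} - \max\Bigl\{C,\ \max_{j} x_{\mathrm{g}j}\Bigr\}
    \;=\;
    \min\Bigl\{\sum_{i} x_{\mathrm{g}i} - C,\ \min_{j}\ \sum_{i \neq j} x_{\mathrm{g}i}\Bigr\},
\end{equation*}
and invoking that a minimum of nondecreasing functions is nondecreasing; this single representation would in fact also re-derive concavity (a minimum of affine functions is concave), so it packages two of the three properties at once. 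You instead argue locally: the max term is $1$-Lipschitz under an increase of any single coordinate, the linear sum increases by exactly that amount when the coordinate is active (and nothing changes when it is inactive), so every coordinate increment has nonnegative net effect, and chaining coordinates gives monotonicity in the componentwise order. Your route is more elementary and self-contained, at the cost of not reusing the same structure for concavity. One caution: your parenthetical ``equivalently'' phrasing---that the expression reduces on each region to a manifestly nondecreasing sum---is not by itself a proof, since piecewise monotonicity on the regions of a piecewise linear function does not imply global monotonicity without a gluing argument across regions; the min representation above is precisely what supplies that gluing. Keep the Lipschitz increment argument as the actual proof, or state the min representation explicitly.
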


\begin{proof}
    For all $r \in \set{R} \setminus \{\mathrm{g}\}$ and $n \in \set{N}$, the functions $\bar x_{rn}$ are affine and thus concave. For $c = 0$, the function $\bar x_{\mathrm{g}nc}$ is affine as well. For $c = 1$, it is concave piecewise linear because the maximum of affine functions is convex piecewise linear. The functions are nondecreasing because they can be expressed as a minimum of nondecreasing linear functions.
\end{proof}

As expected, the installed capacity is nondecreasing in capacity investments. We account for non-grid resources claiming capacity credits valued at price $\bar{\bm p}$ in the net investment cost function
\begin{equation}
    f(\bm x) \coloneq \sum_{n \in \set{N}} \sum_{r \in \set{R}} c_{rn}(x_{rn}) - \sum_{r \in \set{R} \setminus \{\mathrm{g}\}} \bar p_{rn} \bar x_{rn}(\bm x_r).
\end{equation}
The net investment cost may be increasing or decreasing in capacity investments, depending on the relative magnitude of investment costs to capacity payments.

\subsection{Operating cost}
Each planning period consists of~$J$ operating periods, \eg, days, which themselves consist of~$K$ subperiods, \eg, hours. We distinguish electricity supply from demand, so that all operating decisions are nonnegative. On the supply side, $y^\mathrm{s}_{rnjkc}$ denotes the power supplied by resource $r \in \set{R}$ in subperiod~$k$ of operating period~$j$ of planning period~$n$ under contingency~$c$. Similarly, $y^\mathrm{d}_{rnjkc}$ denotes the power consumed by resource $r \in \set{D} \coloneq \{\mathrm{g}, \mathrm{\ell}, \mathrm{s}\}$. In addition to grid and storage demand, the set $\set{D}$ contains electric load~($\mathrm{\ell}$), \ie, the exogenous electricity demand from customers of the distribution company.

We assume linear operating costs in all periods~$\set{J}$ ($\vert \set{J} \vert = J$) and subperiods~$\set{K}$ ($\vert \set{K} \vert = K$),
\begin{equation}
    g_{nc}(\bm y) \coloneq T_{c} \sum_{j \in \set{J}}\sum_{k \in \set{K}} \left( \sum_{r \in \set{R}} p^\mathrm{s}_{rnjk} y^\mathrm{s}_{rnjkc} - \sum_{r \in \set{D}} p^\mathrm{d}_{rnjk} y^\mathrm{d}_{rnjkc} \right),
\end{equation}
where $T_{c}$ is a probability-weighted time duration and $p^\mathrm{s}_{rnjk}$ and $p^\mathrm{d}_{rnjk}$ are prices for supply and demand, respectively. The operating decisions must obey the following constraints. In every subperiod, supply must equal demand, \ie,
\begin{equation}
    \sum_{r \in \set{R}} y^\mathrm{s}_{rnjkc} = \sum_{r \in \set{D}} y^\mathrm{d}_{rnjkc},~~\forall (n,j,k,c) \in \set{N} \times \set{J} \times \set{K} \times \set{C}.
\end{equation}
Supply and demand must respect capacity limitations, \ie, for all $(n,j,k,c)$ in $\set{N} \times \set{J} \times \set{K} \times \set{C}$,
\begin{subequations}
\label{eq:cap_on_op}
\begin{align}
    & 
    y^\mathrm{s}_{\mathrm{g}njkc} \leq \bar x_{\mathrm{g}nc}(\bm x_\mathrm{g}),~~
    y^\mathrm{s}_{\mathrm{s}njkc} \leq \bar x_{\mathrm{s}n}(\bm x_\mathrm{s}),~~
    y^\mathrm{s}_{\mathrm{b}njkc} \leq \bar x_{\mathrm{b}n}(\bm x_\mathrm{b}),\\
    & 
    y^\mathrm{d}_{\mathrm{g}njkc} \leq \bar x_{\mathrm{g}nc}(\bm x_\mathrm{g}),~~
    y^\mathrm{d}_{\mathrm{s}njkc} \leq \bar x_{\mathrm{s}n}(\bm x_\mathrm{s}),~~
    y^\mathrm{d}_{\ell njkc} \leq \bar y_{\ell njk},
\end{align}
\end{subequations}
where $\bar y_{\ell njk}$ is electric load. If load shedding is not permissible, we set
\begin{equation}
    y^\mathrm{d}_{\ell njkc} = \bar y_{\ell njk}.
\end{equation}
Storage must maintain a state-of-charge between zero and an upper bound given by the product of installed power capacity and storage duration $T^\mathrm{s}$, \ie,
\begin{equation}
    0 \leq y^0_{n} + \Delta t \sum_{l = 1}^k\left( \eta^\mathrm{c} y^\mathrm{d}_{\mathrm{s}njlc} - \frac{y^\mathrm{s}_{\mathrm{s}njlc}}{\eta^\mathrm{d}}\right) \leq T^\mathrm{s} \bar x_{\mathrm{s}n}(\bm x_\mathrm{s}), ~~ \forall (n, j, k, c) \in \set{N} \times \set{J} \times \{0\} \cup \set{K} \times \set{C},
\end{equation}
where $y^0_n$ is an initial state-of-charge target for planning period~$n$, $\Delta t$ is the duration of a subperiod, and $\eta^\mathrm{c}$ and $\eta^\mathrm{d}$ are charging and discharging efficiencies, respectively.

\begin{Rmk}\label{rmk:y0}
    We require the initial state-of-charge to be the same across operating periods and contingency scenarios to limit the use of perfect foresight and ensure smooth transitions from contingency to non-contingency operations. $\hfill \Box$
\end{Rmk}

To mitigate end-of-horizon effects, the terminal state-of-charge in each operating period must equal the initial state-of-charge, \ie,
\begin{equation}
    \sum_{k \in \set{K}} \eta^\mathrm{c} y^\mathrm{d}_{\mathrm{s}njkc} - \frac{y^\mathrm{s}_{\mathrm{s}njkc}}{\eta^\mathrm{d}} = 0, ~~ \forall (n,j,c) \in \set{N} \times \set{J} \times \set{C}.
\end{equation}
To maintain battery warranty, the energy throughput per planning period may not exceed a certain threshold, \eg, 150~discharge cycles per year for the battery in our case study, which gives rise to the constraint
\begin{equation}
    \frac{\Delta t}{\eta^\mathrm{d}} \sum_{j \in \set{J}} \sum_{k \in \set{K}} y^\mathrm{s}_{\mathrm{s}njkc} \leq C^\mathrm{s} T^\mathrm{s}  \bar x_{\mathrm{s}n}(\bm x_\mathrm{s}),
    ~~\forall(n,c) \in \set{N} \times \set{C},
\end{equation}
where $C^\mathrm{s}$ is the admissible number of discharge cycles per planning period. The following proposition characterizes the operational constraints.

\begin{Prop}\label{prop:y}
    For fixed investment decisions, all operational constraints can be represented as a set of linear constraints~$\set{Y}_{nc}(\bm x)$ for all $n \in \set{N}$ and $c \in \set{C}$.
\end{Prop}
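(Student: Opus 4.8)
The plan is to fix the investment vector $\bm x$ and then verify that every operational constraint introduced above is affine in the operational variables $\bm y$. The central observation is that, once $\bm x$ is fixed, each capacity function appearing on a right-hand side---namely $\bar x_{\mathrm{b}n}(\bm x_\mathrm{b})$, $\bar x_{\mathrm{s}n}(\bm x_\mathrm{s})$, and $\bar x_{\mathrm{g}nc}(\bm x_\mathrm{g})$---evaluates to a constant. By Proposition~\ref{prop:xbar} these functions are (piecewise) linear and hence well-defined real numbers at any fixed $\bm x$; in particular, the only genuine nonlinearity, the maximum term in $\bar x_{\mathrm{g}n1}$ under contingency, collapses to a scalar and never interacts with $\bm y$.

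With this in hand, I would proceed constraint by constraint. The supply--demand balance and the terminal state-of-charge equality are already linear in $\bm y$. The capacity bounds in~\eqref{eq:cap_on_op}, the load-shedding equality, and the exogenous load bound $\bar y_{\ell njk}$ become box-type linear (in)equalities once the capacity constants are substituted. The state-of-charge bounds are finite partial sums of the charge and discharge variables, hence affine, and are bounded above by the constant $T^\mathrm{s}\bar x_{\mathrm{s}n}(\bm x_\mathrm{s})$. Finally, the throughput (warranty) constraint is a single linear inequality in the discharge variables with constant right-hand side $C^\mathrm{s} T^\mathrm{s} \bar x_{\mathrm{s}n}(\bm x_\mathrm{s})$. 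Collecting all these affine equalities and inequalities for a given pair $(n,c)$ into one system defines the polyhedron $\set{Y}_{nc}(\bm x)$, which completes the argument.

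The only point requiring care---and the step I would flag as the main obstacle---is justifying that the contingency capacity $\bar x_{\mathrm{g}n1}$ causes no difficulty here. Although it is nonsmooth and, as a function of $\bm x$, built from a maximum, fixing the investments removes it from the decision variables entirely, so its piecewise-linear structure is irrelevant to the linearity of $\set{Y}_{nc}(\bm x)$ in $\bm y$. I would state this explicitly to preempt the concern that the $\max$ operator might require an auxiliary mixed-integer reformulation at this stage; such a reformulation is needed only when $\bm x$ is itself a decision variable, which is precisely the regime excluded by the hypothesis ``for fixed investment decisions.''
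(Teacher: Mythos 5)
Your proof is correct for the proposition as stated, but it takes a more elementary route than the paper's. You fix $\bm x$, observe that every capacity function---including the contingency capacity $\bar x_{\mathrm{g}n1}$ with its max term---collapses to a constant, and conclude that every operational constraint is affine in $\bm y$. The paper argues structurally instead: the capacity limitations~\eqref{eq:cap_on_op} have the form $(\text{linear in } \bm y) \leq \bar x(\bm x)$ with $\bar x$ concave piecewise linear (Proposition~\ref{prop:xbar}), and a concave upper bound is a convex constraint, so writing $\bar x$ as a minimum of affine functions turns each capacity limit into finitely many inequalities that are linear \emph{jointly} in $(\bm y,\bm x)$. Both arguments establish the proposition; the paper's buys something extra that your substitution of constants discards, namely an explicit representation of how the right-hand sides depend on $\bm x$, which is exactly the structure Proposition~\ref{prop:opex} exploits when it applies linear-programming sensitivity analysis to constraints of the form $a^\top \bm y + b \leq \bar x(\bm x)$ to conclude that $g^\star_{nc}$ is nonincreasing convex piecewise linear. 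One small correction to your closing remark: a mixed-integer reformulation of the max operator is \emph{not} needed even when $\bm x$ is a decision variable. Because the max term is subtracted, $\bar x_{\mathrm{g}n1}$ is concave, and a constraint $y \leq A(\bm x_\mathrm{g}) - \max_i v_i(\bm x_\mathrm{g})$ with $A$ and each $v_i$ affine is equivalent to the linear system $y + v_i(\bm x_\mathrm{g}) \leq A(\bm x_\mathrm{g})$ for all $i$. The binary variables in the paper's reformulation (Appendix~\ref{sec:opt_formulation}) come from the lumpy investment cost~$f$ and from the market participation constraint~\eqref{eq:m}, where a convex function appears on the wrong side of the inequality---not from the capacity caps treated here.
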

\begin{proof}
    The only complicated constraints are the capacity limitations in equations~\eqref{eq:cap_on_op}, which require that operating decisions be smaller than the concave piecewise linear capacity functions $\bar x$. Because concave upper bounds are convex constraints, these conditions are linearly representable.
\end{proof}

Proposition~\ref{prop:y} implies that the operating cost for fixed investment decisions and fixed $(n, c) \in \set{N} \times \set{C}$ are given by the optimal value of the linear program
\begin{equation}\label{pb:g}\tag{OC}
    g^\star_{nc}(\bm x) = \min_{\bm y \in \set{Y}_{nc}(\bm x)} g_{nc}(\bm y),
\end{equation}
which is further characterized by the following proposition.

\begin{Prop}\label{prop:opex}
    The optimal value function~$g^\star_{nc}$ is nonincreasing convex piecewise linear.
\end{Prop}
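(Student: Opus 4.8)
The plan is to view $g^\star_{nc}$ as a composition of two maps: an inner map $\bm x \mapsto \bm b(\bm x)$ that collects the capacity quantities appearing on the right-hand side of the constraints in $\set{Y}_{nc}$, and an outer map $\bm b \mapsto v(\bm b)$ that returns the optimal value of the linear program once those capacities are held fixed. The objective $g_{nc}(\bm y)$ is linear in $\bm y$ and does not depend on $\bm x$ at all; inspecting the constraints listed before Proposition~\ref{prop:y}, the decision $\bm x$ enters $\set{Y}_{nc}(\bm x)$ only through the quantities $\bar x_{\mathrm{b}n}$, $\bar x_{\mathrm{g}nc}$, and $\bar x_{\mathrm{s}n}$ (the last also scaled by the nonnegative constants $T^\mathrm{s}$ and $C^\mathrm{s}T^\mathrm{s}$), and each of these bounds the operating variables \emph{from above}. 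So $v$ is an ordinary right-hand-side value function, and $\bm b(\bm x)$ is, by Proposition~\ref{prop:xbar}, a vector of nondecreasing concave piecewise linear functions.

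First I would establish the properties of the outer map $v$ in its argument $\bm b$. Since every component of $\bm b$ enters only as an upper bound, increasing any $b_i$ relaxes a constraint and weakly enlarges the feasible polytope; minimizing a fixed linear objective over a larger set can only lower the optimum, so $v$ is nonincreasing in each coordinate. Convexity and piecewise linearity follow from linear programming duality: the dual feasible region is independent of $\bm b$, the dual objective is linear in $\bm b$ for each fixed dual vector, and strong duality lets me write $v(\bm b)$ as the pointwise maximum of these linear functions over the finitely many vertices of the fixed dual polyhedron. A pointwise maximum of finitely many affine functions is convex and piecewise linear.

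Then I would compose. Writing $g^\star_{nc}(\bm x) = v(\bm b(\bm x))$, monotonicity is immediate: $\bm b$ is nondecreasing and $v$ is nonincreasing, so $g^\star_{nc}$ is nonincreasing. For convexity I would invoke the standard composition rule, namely that if the outer function is convex and nonincreasing in each argument and every inner function is concave, then the composition is convex; here $v$ is convex and coordinatewise nonincreasing and each $\bar x$ is concave by Proposition~\ref{prop:xbar}. Piecewise linearity is preserved because a convex piecewise linear $v$ composed with a piecewise linear $\bm b$ is again piecewise linear: on each polyhedral cell where $\bm b$ is affine, $v \circ \bm b$ is piecewise linear in $\bm x$, and there are only finitely many such cells.

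The main obstacle is the bookkeeping around feasibility and boundedness that the duality step silently assumes. I would first observe that, since every operating variable is nonnegative and bounded above by a capacity, $\set{Y}_{nc}(\bm x)$ is a bounded polytope, so the program attains a finite optimum whenever it is nonempty and can never be unbounded below. Infeasibility can still occur (for instance, insufficient capacity when load shedding is barred), so strictly I would set $v = +\infty$ off its effective domain and verify that this convention keeps $v$ convex and nonincreasing as an extended-real-valued function and leaves the composition well defined; the monotonicity and composition arguments then carry over verbatim. A minor point to confirm is that $T^\mathrm{s}$ and $C^\mathrm{s}T^\mathrm{s}$ are nonnegative, so that the storage state-of-charge and throughput bounds genuinely relax, rather than tighten, as $\bar x_{\mathrm{s}n}$ grows.
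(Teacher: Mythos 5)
Your proof is correct and takes essentially the same route as the paper's: the paper also views $\set{Y}_{nc}(\bm x)$ as linear constraints whose right-hand sides are the nondecreasing concave piecewise linear capacity functions of Proposition~\ref{prop:xbar}, and then appeals to linear programming sensitivity analysis (Bertsimas and Tsitsiklis, Theorem~5.1) for precisely the right-hand-side value-function facts you derive via duality. The only difference is level of detail: you prove the cited sensitivity result and make explicit the convex-nonincreasing-with-concave composition step and the feasibility/boundedness bookkeeping that the paper leaves implicit.
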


\begin{proof}
    The set $\set{Y}_{nc}$ is an intersection of halfspaces, each determined by linear inequalities of the form $a^\top \bm y + b \leq \bar x(\bm x)$. The capacity functions~$\bar x$ are nondecreasing concave piecewise linear by Proposition~\ref{prop:xbar}. The claim thus follows from linear programming sensitivity analysis~\citep[Theorem~5.1]{bertsimas1997introduction}.
\end{proof}

As expected, the operational costs are nonincreasing in capacity investments. Minimizing total costs thus requires a trade-off between net investment costs and operating costs, if the former are increasing in capacity investments. 

\subsection{Total cost}
The distribution company aims to minimize investment and operating costs, 
\begin{equation*}\label{pb:tc}\tag{TC}
    \min_x f(\bm x) + \sum_{n \in \set{N}} \sum_{c \in \set{C}} g^\star_{nc}(\bm x).
\end{equation*}
As operating costs are convex and nonincreasing in the capacity investments~$\bm x$, problem~\eqref{pb:tc} models the trade-off between investment costs and operating costs. The computational difficulty in solving~\eqref{pb:tc} stems from the nonconvex noncontinuous extended-real-valued function~$f$, which models the limits on admissible investments and hence the tradeoff between lumpy investments with low per-unit costs (grid investments) and modular investments with high per-unit costs (backup generation and storage). We model this trade-off with $N \vert \set{R} \vert$~binary variables in problem~\eqref{pb:P} in Appendix~\ref{sec:opt_formulation}.

\subsection{Market participation constraints}
In deregulated markets, distribution companies may be allowed to use backup generation and storage to avoid shedding exogenous load, but not for market participation in general. If such restrictions apply, we limit the supply from non-grid resources to the shortfall of grid capacity relative to demand, by imposing the market participation constraints,
\begin{equation}\label{eq:m}
\sum_{r \in \set{R} \setminus \{\mathrm{g}\}} y^\mathrm{s}_{rnjkc} \leq \left[ y^\mathrm{d}_{\ell njkc} - \bar x_{\mathrm{g}nc}(\bm x_\mathrm{g}) \right]^+,~~\forall (n,j,k,c) \in \set{N} \times \set{J} \times \set{K} \times \set{C}.
\end{equation}

For any $(n,c) \in \set{N} \times \set{C}$ and any $\bm x_\mathrm{g} \in \mathbb{R}^N$, the set of operating decisions~$\bm y$ that satisfies these constraints is given by
$
    \set{M}_{nc}(\bm x_\mathrm{g}) = \left\{ \bm y : \eqref{eq:m} \right\}.
$
The following proposition characterizes this set.

\begin{Prop}\label{prop:M}
  For any $(n,c) \in \set{N} \times \set{C}$, the market participation constraints define a nonconvex feasible set~$\set{M}_{nc}(\bm x_\mathrm{g})$ in the operating decisions. The feasible set~$\set{M}_{nc}(\bm x_\mathrm{g})$ is nonincreasing in $\bm x_\mathrm{g}$ in the sense that for any $\bm x_\mathrm{g} \leq \bm x_\mathrm{g}'$, componentwise, $\set{M}_{nc}(\bm x_\mathrm{g}') \subseteq \set{M}_{nc}(\bm x_\mathrm{g})$.
\end{Prop}

\begin{proof}
    The set is nonconvex because constraints~\eqref{eq:m} impose lower bounds on the functions\begin{equation}\label{eq:m_rhs}
        \left[ y^\mathrm{d}_{\ell njkc} - \bar x_{\mathrm{g}nc}(\bm x_\mathrm{g}) \right]^+
        =
        \max\left\{
        0, y^\mathrm{d}_{\ell njkc} -\bar x_{\mathrm{g}nc}(\bm x_\mathrm{g})
        \right\},
    \end{equation}
    which are convex in~$\bm x_\mathrm{g}$ by composition as $\bar x_{\mathrm{g}nc}$ is concave by Proposition~\ref{prop:xbar} and the function $h(x) = \max\{0, - x\}$ is convex and nonincreasing~\citep[eq.~(3.10)]{SB04}. The set inclusion holds because $h$~is nonincreasing and $\bar x_{\mathrm{g}nc}$ nondecreasing.
\end{proof}
The nonconvexities can be modeled by introducing binary variables for each subperiod, which increases the number of binary variables in the overall problem to~$N \vert \set{R} \vert + NJK\vert \set{C} \vert$. Operating decisions that respect the market participation constraints can be found by intersecting the feasible set in problem~\ref{pb:g} with~$\set{M}_{nc}(\bm x_\mathrm{g})$ for each $n \in \set{N}$ and $c \in \set{C}$. For fixed investment decisions and fixed $(n, c) \in \set{N} \times \set{C}$, the operating cost under market participation constraints is thus given by
\begin{equation}\label{pb:coc}\tag{COC}
    \bar g^\star_{nc}(\bm x) = \min_{\bm y \in \set{Y}_{nc}(\bm x) \cap \set{M}_{nc}(\bm x_\mathrm{g})} ~ g_{nc}(\bm y).
\end{equation}

As the sets $\set{M}_{nc}$ are nonincreasing in $\bm x_\mathrm{g}$, imposing the market participation constraints invalidates Proposition~\ref{prop:opex}: We lose convexity and monotonicity in grid investments but retain monotonicity in backup and storage investments, as established in the following proposition.

\begin{Prop}\label{prop:gbar}
    The optimal value function $\bar g^\star_{nc}$ is piecewise linear and nondecreasing in $\bm x_\mathrm{b}$ and $\bm x_\mathrm{s}$. For any $\bm x \in \dom f$, we have $\bar g^\star_{nc}(\bm x) \geq g^\star_{nc}(\bm x)$.
\end{Prop}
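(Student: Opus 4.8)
The plan is to rewrite $\bar g^\star_{nc}$ as a pointwise minimum of finitely many convex parametric linear programs, so that every structural claim can be read off from linear-programming sensitivity analysis, and then to settle the monotonicity and the inequality by tracking the feasible region as the capacity parameters vary. The first step is to remove the nonconvexity of $\set{M}_{nc}(\bm x_\mathrm{g})$: in every subperiod the bound $\left[ y^\mathrm{d}_{\ell njkc} - \bar x_{\mathrm{g}nc}(\bm x_\mathrm{g}) \right]^+$ equals either $0$ or the affine expression $y^\mathrm{d}_{\ell njkc} - \bar x_{\mathrm{g}nc}(\bm x_\mathrm{g})$, so fixing a sign for each subperiod yields a finite family of polyhedral selections on which \eqref{eq:m} reduces to a single linear inequality. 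Intersecting each selection with the polyhedron $\set{Y}_{nc}(\bm x)$ from Proposition~\ref{prop:y} produces finitely many parametric LPs, and $\bar g^\star_{nc}(\bm x)$ is the pointwise minimum of their optimal values, with selections that are infeasible at a given $\bm x_\mathrm{g}$ returning $+\infty$ and not affecting the minimum.

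For piecewise linearity I would apply linear-programming sensitivity analysis to each selection exactly as in Proposition~\ref{prop:opex}, via \citet[Theorem~5.1]{bertsimas1997introduction}: the capacity functions $\bar x_{\mathrm{b}n}$, $\bar x_{\mathrm{s}n}$, $\bar x_{\mathrm{g}nc}$ and the load enter only the right-hand sides, and each is piecewise linear in $\bm x$ by Proposition~\ref{prop:xbar}, so each selection's optimal value is piecewise linear in $\bm x$. A pointwise minimum of finitely many piecewise linear functions is piecewise linear, which gives the first conclusion; convexity is generally lost because the selection boundaries move with $\bm x_\mathrm{g}$, consistent with the remarks preceding the proposition.

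For the monotonicity in $\bm x_\mathrm{b}$ and $\bm x_\mathrm{s}$ I would exploit that $\set{M}_{nc}$ depends only on $\bm x_\mathrm{g}$. Freezing $\bm x_\mathrm{g}$, the selection boundaries are fixed and $\bm x_\mathrm{b}, \bm x_\mathrm{s}$ enter every selection only through the monotone right-hand sides $\bar x_{\mathrm{b}n}(\bm x_\mathrm{b})$ and $\bar x_{\mathrm{s}n}(\bm x_\mathrm{s})$, so the feasible regions of all selections are ordered by inclusion along these arguments and each parametric value function depends monotonically on them; the direction is the nondecreasing one stated in the proposition, which I would confirm from the sign of the active right-hand-side coefficients. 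The inequality $\bar g^\star_{nc}(\bm x) \geq g^\star_{nc}(\bm x)$ for $\bm x \in \dom f$ is then immediate from $\set{Y}_{nc}(\bm x) \cap \set{M}_{nc}(\bm x_\mathrm{g}) \subseteq \set{Y}_{nc}(\bm x)$, since minimizing the same objective $g_{nc}$ over a smaller set cannot lower the optimal value.

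The main obstacle is twofold. The nonconvexity injected by the $[\cdot]^+$ bound blocks a direct appeal to Proposition~\ref{prop:opex}, and the selection decomposition is what restores tractability and delivers piecewise linearity. The more delicate point is fixing the \emph{direction} of monotonicity once the minimum over selections is taken: one must trace the sign of the dependence through the active right-hand sides, accounting for the fact that $g_{nc}$ combines supply costs with demand revenues and that the $[\cdot]^+$ term couples $\bm x_\mathrm{g}$ to the admissible non-grid supply. Carrying out this sign check selection by selection, and verifying that it survives the pointwise minimum, is where the argument must be executed with care.
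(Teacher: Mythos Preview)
Your argument is essentially the paper's: piecewise linearity via the disjunctive/selection decomposition of the $[\cdot]^+$ term (the paper phrases this as a mixed-binary LP, which is exactly your finite family of selections indexed by the sign choices), monotonicity in $\bm x_\mathrm{b},\bm x_\mathrm{s}$ from the fact that $\set{M}_{nc}$ depends only on $\bm x_\mathrm{g}$ while $\set{Y}_{nc}$ expands in all capacities, and the inequality from feasible-set inclusion. One caveat worth flagging: your inclusion argument (larger feasible set $\Rightarrow$ smaller minimum) actually delivers \emph{nonincreasing} in $\bm x_\mathrm{b},\bm x_\mathrm{s}$, consistent with Proposition~\ref{prop:opex} and with the paper's own proof (which says only ``monotone''); the word ``nondecreasing'' in the statement is a slip, so you should not try to ``confirm'' that direction from the right-hand-side signs---the sign check you propose would contradict it.
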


\begin{proof}
    For any $(n,c) \in \set{N} \cap \set{C}$, the function is piecewise linear because the case distinction underlying the max-term in constraint~\eqref{eq:m} can be expressed as a series of disjunctive inequalities with auxiliary binary variables, which results in a mixed-binary linear program. For this problem specifically, Theorem~2.1 by \cite{blair1977value} applies because   
    $\bar g^\star_{nc}(0) = 0$. The function is nondecreasing in $\bm x_\mathrm{b}$ and $\bm x_\mathrm{s}$ because the set $\set{Y}_{nc}$ is nondecreasing in~$\bm x$, in the sense that for any $\bm x_\mathrm{g} \leq \bm x'_\mathrm{g}$, componentwise, $\set{Y}_{nc}(\bm x) \subseteq \set{Y}_{nc}(\bm x')$, and the set~$\set{M}_{nc}$ depends on~$\bm x$ only through~$\bm x_\mathrm{g}$. The inequality holds because the feasible set in problem~\eqref{pb:coc} is feasible in problem~\eqref{pb:g}.
\end{proof}

\begin{Rmk}
    Under fixed grid investments and electricity demand decisions, \eg, in the absence of load shedding, constraint~\eqref{eq:m} is linear and can be modeled without binary variables.
\end{Rmk}

In summary, the theoretical analysis reveals that (\emph{i})~operating costs are nonincreasing in backup and storage investments (Proposition~\ref{prop:gbar});
(\emph{ii})~maybe surprisingly, operating costs may be \emph{nondecreasing} in grid investments if market participation is constrained (Proposition~\ref{prop:M}); (\emph{iii})~market participation constraints increase operating costs (Proposition~\ref{prop:gbar}); and (\emph{iv})~computational complexity arises from the market participation constraints and the nonconvex investment cost function, which models the trade-off between modular high-cost resources and lumpy low-cost resources. The market participation constraints and the investment cost function can be modeled using $N J K \vert \set{C} \vert$ and $\vert \set{R} \vert N$ binary variables, respectively. Appendix~\ref{sec:opt_formulation} provides a corresponding mixed-integer linear reformulation.
\section{Numerical experiments}\label{sec:cs}
\subsection{Geographical region}
We study capacity expansion planning on the island of Nantucket, Massachusetts, which suits our purposes because (\emph{i})~it is located in a deregulated state that does not allow storage owned by distribution companies to participate in electricity markets; (\emph{ii})~it features publicly available data about the current energy system and its future evolution; (\emph{iii})~its interconnection to the mainland is limited to two subsea cables, which facilitates analysis; and (\emph{iv})~it is equipped with a battery owned by a distribution company.

The Nantucket Electric Company, a subsidiary of National Grid, serves the entire island and no other territory. In January 2024, the company filed a plan to proactively upgrade its distribution grid with the Massachusetts Department of Public Utilities as required by law~\citep[\S~53]{ma2022act}. The plan describes the current energy system on the island and its projected future evolution through the year~2050 \citep{nationalgrid2024ESMP}. In addition to data from the distribution company, past electricity demand and price data are available from the grid operator serving Nantucket. Specifically, the Independent System Operator New England (ISO-NE) assigns the network node \texttt{LD.CANDLE 13.2} with ID \texttt{16255} in load zone \texttt{4006} (Southeastern Massachusetts) to Nantucket.

In 2019, National Grid installed a 6MW/48MWh battery and a 13MW backup generator on the island to delay investments in an additional subsea cable that would have been needed to ensure supply if one of the two existing cables were to fail. Also in 2019, the Pacific Northwest National Laboratory~(PNNL) released a study about the battery and the generator~\citep{balducci2019nantucket}. 

\subsection{Data}\label{sec:data}
This section describes a few important problem parameters. A detailed list of all parameters with references, mostly pointing to National Grid, the ISO-NE, and PNNL, can be found in Appendix~\ref{apx:data}.

The left panel of Figure~\ref{fig:nantucket} shows the projected evolution of peak load and generation capacity in the absence of new investments from 2025 through 2050 with yearly resolution. In this study, we assume that peak load grows as in National Grid's highest load scenario, \ie, from 62MW in 2025 to 98MW in 2050. In the absence of new investments, the installed capacity falls from 93MW in 2025 to zero by 2047, and N-1 capacity, \ie, total installed capacity minus the largest installed unit, falls from 55MW in 2025 to zero in 2040. We match our case study planning horizon with the 2050 horizon used by National Grid. Over this horizon, it will be necessary to fully renew the existing energy infrastructure and almost double the N-1 capacity.
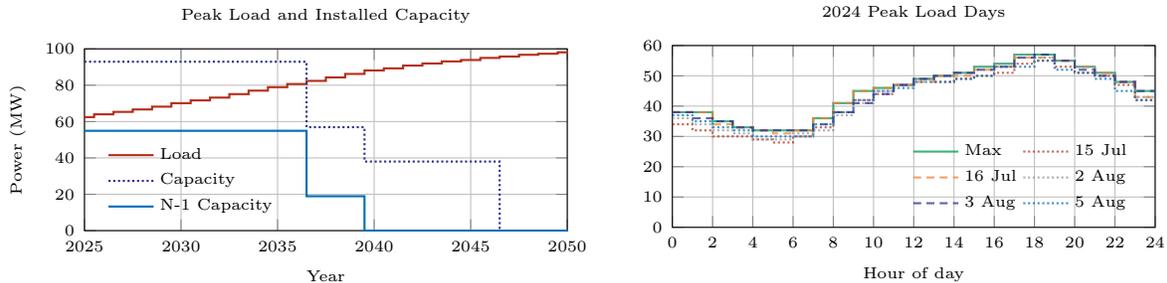
\begin{figure}[!t]
    \centering
    \begin{subfigure}{0.48\textwidth}
    \centering
    \begin{tikzpicture}[font=\tiny]
    \begin{axis}[
        width=8cm, 
        height=4cm, 
        enlarge x limits=false,
        ymin = 0, ymax = 100,
        ytick distance = 20,
        yticklabel style={
        /pgf/number format/fixed,
        /pgf/number format/precision=0
        },
        xticklabel style={
        /pgf/number format/1000 sep={}
    },
        scaled y ticks=false,
        xmin = 2025, xmax = 2050,
        xtick distance=5,
        xlabel={Year},
        ylabel={Power (MW)},
        title={Peak Load and Installed Capacity},
        grid=both, 
        legend style={
        cells={anchor=west},
        draw=none,
        fill=none,
        legend pos= south west,
        font=\tiny
        }, 
    ]
    \addplot+[no marks, solid, const plot, opacity=1, thick, BrickRed] table [x=year, y=load_high, col sep=tab] {data/case_study_parameters.txt};
    \addplot+[no marks, const plot, opacity=1, densely dotted, thick, Blue] table [x=year, y=capa_tot, col sep=tab] {data/case_study_parameters.txt};
    \addplot+[no marks, const plot, opacity=1, solid, thick, NavyBlue] table [x=year, y=capa_N1, col sep=tab] {data/case_study_parameters.txt};
    \legend{
        Load, Capacity, N-1 Capacity
    }
    \end{axis}
    \end{tikzpicture}
    \end{subfigure}
    \begin{subfigure}{0.48\textwidth}
    \centering
    \begin{tikzpicture}[font=\tiny]
    \begin{axis}[
        width=8cm, 
        height=4cm, 
        enlarge x limits=false,
        xmin = 0, xmax = 24,
        ymin = 0, ymax = 60,
        ytick distance = 10,
        yticklabel style={
        /pgf/number format/fixed,
        /pgf/number format/precision=0
        },
        xticklabel style={
        /pgf/number format/1000 sep={}
    },
        scaled y ticks=false,
        xtick distance=2,
        xlabel={Hour of day},
        title={2024 Peak Load Days},
        grid=both, 
        legend style={
        cells={anchor=west},
        draw=none,
        fill=none,
        legend pos= south east,
        font=\tiny,
        legend columns = 2
        }, 
    ]
    \addplot+[no marks, solid, const plot, opacity=0.75, thick, ForestGreen] table [x=Hour, y=Max, col sep=tab] {data/peak_days.txt};
    \addplot+[no marks, densely dotted, const plot, opacity=0.75, thick, BrickRed] table [x=Hour, y=15.07.2024, col sep=tab] {data/peak_days.txt};
    \addplot+[no marks, densely dashed, const plot, opacity=0.75, thick, Orange] table [x=Hour, y=16.07.2024, col sep=tab] {data/peak_days.txt};
    \addplot+[no marks, densely dotted, const plot, opacity=0.75, thick, Gray] table [x=Hour, y=02.08.2024, col sep=tab] {data/peak_days.txt};
    \addplot+[no marks, densely dashed, const plot, opacity=0.75, thick, Blue] table [x=Hour, y=03.08.2024, col sep=tab] {data/peak_days.txt};
    \addplot+[no marks, densely dotted, const plot, opacity=0.75, thick, NavyBlue] table [x=Hour, y=05.08.2024, col sep=tab] {data/peak_days.txt};
    \legend{
        Max, 15 Jul, 16 Jul, 2 Aug, 3 Aug, 5 Aug
    }
    \end{axis}
    \end{tikzpicture}
    \end{subfigure}
    \caption{Electric load and generation capacity in the absence of new investments.}
    \label{fig:nantucket}
\end{figure}

The right panel of Figure~\ref{fig:nantucket} shows load trajectories with hourly resolution for the 5~days with the highest peak load in the year~2024. These days are July 15 and 16, and August 2, 3, and 5. All days follow a similar pattern. Load is lowest at about 30MW in the early morning between 4--6am and highest at 57MW in the early evening between 5--7pm. The large load swings, near-doubling over a day, suggest opportunities for storage to reduce peak load by charging when load is low and discharging when load is high. Given that peak load days tend to follow each other, our modeling choice  to impose a fixed state-of-charge target at the beginning of each day, see Remark~\ref{rmk:y0}, seems appropriate.

Figure~\ref{fig:Ntkt2024} shows the load and electricity price for each hour in the year~2024, as reported by ISO-NE. Consistent with observations from peak load days, load is high on summer evenings and low during the rest of the year, which suggests that storage use for peak load reduction should be concentrated in summer. Electricity price is high on summer evenings and in winter. This suggests that storage operation for peak load reduction is at least partially aligned with storage operation for market participation. In summer, it is best to charge in the morning and discharge in the evening. During the rest of the year, no peak load reduction is needed, and storage could, in principle, be freely used for market participation absent regulatory constraints.

\begin{figure}[t]
    \centering
    \begin{subfigure}{0.45\textwidth}
    \includegraphics[width=0.9\textwidth]{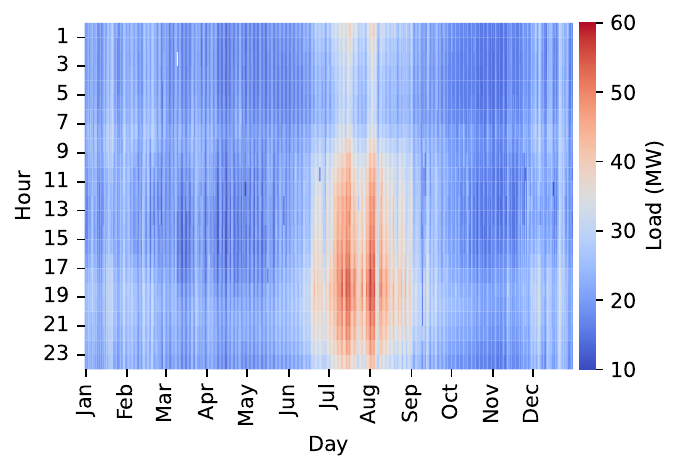}
    \end{subfigure}
    \begin{subfigure}{0.45\textwidth}
    \includegraphics[width=0.83\textwidth]{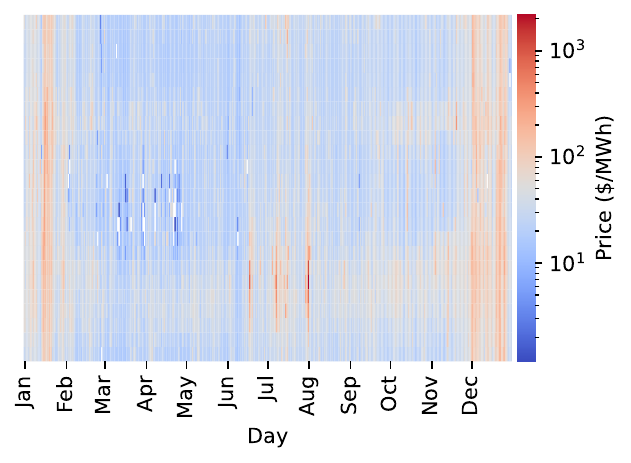}
    \end{subfigure}
    \caption{Nantucket electricity load and price in 2024.}
    \label{fig:Ntkt2024}
\end{figure}

Figure~\ref{fig:scarcity} shows all scarcity events in the year 2024, \ie, times at which the ISO-NE did not have sufficient generation capacity to reliably meet demand. In these times, the ISO-NE calls on generation providers that had been awarded supply obligations in a forward capacity market. Supply obligation holders are remunerated at the capacity price shown in Figure~\ref{fig:fcm_price}, which varies on a yearly basis. The year~2024 had two scarcity events, both on summer evenings. This suggests that storage operation for peak load reduction is aligned with capacity market participation.
\begin{figure}[!t]
    \centering
    \begin{subfigure}{0.48\textwidth}
    \centering
    \begin{tikzpicture}[font=\tiny]
    \begin{axis}[
        width=8cm, 
        height=4cm, 
        enlarge x limits=false,
        ymin = 0, ymax = 1,
        ytick distance = 0.2,
        yticklabel style={
        /pgf/number format/fixed,
        /pgf/number format/precision=1
        },
        xticklabel style={
        /pgf/number format/1000 sep={}
    },
        scaled y ticks=false,
        xmin = 0, xmax = 24,
        xtick distance=2,
        xlabel={Hour of day (-)},
        ylabel={Ratio (-)},
        title={18 June 2024},
        grid=both, 
        legend style={
        cells={anchor=west},
        legend pos= north west,
        font=\tiny
        }, 
    ]
    \addplot [
        draw=none,
        fill=gray,
        fill opacity=0.3,
        on layer=axis background,
        legend image code/.code={%
        \draw[fill=gray, fill opacity=0.3, draw=none] (0cm,-0.1cm) rectangle (0.6cm,0.2cm);%
        }
    ] coordinates {
        (17.83,0)
        (18.33,0)
        (18.33,1)
        (17.83,1)
    };
    \addlegendentry{Scarcity}
    \addplot+[no marks, solid, const plot, opacity=1, very thick, NavyBlue] table [x=Hour, y=Price, col sep=tab] {data/scarcity_Jun18.txt};
    \addlegendentry{Price}
    \addplot+[no marks, densely dotted, const plot, opacity=1, very thick, Orange] table [x=Hour, y=Load_24, col sep=tab] {data/scarcity_Jun18.txt};
    \addlegendentry{Actual load}
    \addplot+[no marks, densely dashed, const plot, opacity=1, very thick, BrickRed] table [x=Hour, y=Peak_load, col sep=tab] {data/scarcity_Jun18.txt};
    \addlegendentry{Peak load}
    \end{axis}
    \end{tikzpicture}
    \end{subfigure}
    \begin{subfigure}{0.48\textwidth}
    \centering
    \begin{tikzpicture}[font=\tiny]
    \begin{axis}[
        width=8cm, 
        height=4cm, 
        enlarge x limits=false,
        ymin = 0, ymax = 1,
        ytick distance = 0.2,
        yticklabel style={
        /pgf/number format/fixed,
        /pgf/number format/precision=1
        },
        xticklabel style={
        /pgf/number format/1000 sep={}
    },
        scaled y ticks=false,
        xmin = 0, xmax = 24,
        xtick distance=2,
        xlabel={Hour of day (-)},
        title={1 August 2024},
        grid=both, 
        legend style={
        cells={anchor=west},
        draw=none,
        fill=none,
        legend pos= south west,
        font=\tiny
        }, 
    ]
    \addplot [
        draw=none,
        fill=gray,
        fill opacity=0.3,
        on layer=axis background,
        legend image code/.code={%
        \draw[fill=gray, fill opacity=0.3, draw=none] (0cm,-0.1cm) rectangle (0.6cm,0.2cm);%
        }
    ] coordinates {
        (16.916,0)
        (17.083,0)
        (17.083,1)
        (16.916,1)
    };
        \addplot [
        draw=none,
        fill=gray,
        fill opacity=0.3,
        on layer=axis background,
        legend image code/.code={%
        \draw[fill=gray, fill opacity=0.3, draw=none] (0cm,-0.1cm) rectangle (0.6cm,0.2cm);%
        }
    ] coordinates {
        (17.750,0)
        (19.417,0)
        (19.417,1)
        (17.750,1)
    };
    \addplot+[no marks, solid, const plot, opacity=1, very thick, NavyBlue] table [x=Hour, y=Price, col sep=tab] {data/scarcity_Aug01.txt};
    \addplot+[no marks, densely dotted, const plot, opacity=1, very thick, Orange] table [x=Hour, y=Load_24, col sep=tab] {data/scarcity_Aug01.txt};
    \addplot+[no marks, densely dashed, const plot, opacity=1, very thick, BrickRed] table [x=Hour, y=Peak_load, col sep=tab] {data/scarcity_Aug01.txt};
    \end{axis}
    \end{tikzpicture}
    \end{subfigure}
    \caption{Scarcity events in the ISO-NE in 2024; locational marginal prices and load are normalized by their peak values in the year (\$2173.15/MWh and 57MW).}
    \label{fig:scarcity}
\end{figure}
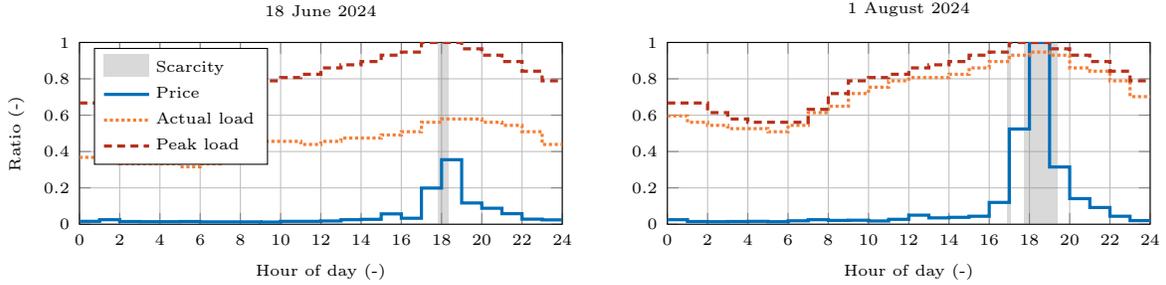
\begin{figure}[!t]
    \centering
    \begin{tikzpicture}[font=\tiny]
    \begin{axis}[
        width=17cm, 
        height=4cm, 
        enlarge x limits=false,
        ymin = 0, ymax = 12,
        ytick distance = 2,
        yticklabel style={
        /pgf/number format/fixed,
        /pgf/number format/precision=0
        },
        xticklabel style={
        /pgf/number format/1000 sep={},
        rotate = 90
    },
        scaled y ticks=false,
        xmin = 2010, xmax = 2050,
        xtick distance=1,
        xlabel={Year},
        title={Capacity price (\$/kW-month)},
        grid=both, 
        legend style={
        cells={anchor=west},
        legend pos= north east,
        font=\tiny
        }, 
    ]
    \addplot [
        draw=none,
        fill=gray,
        fill opacity=0.2,
        on layer=axis background,
        legend image code/.code={%
        \draw[fill=gray, fill opacity=0.2, draw=none] (0cm,-0.1cm) rectangle (0.6cm,0.2cm);%
        }
    ] coordinates {
        (2028.5,0)
        (2050,0)
        (2050,12)
        (2028.5,12)
    };
    \addlegendentry{Forecast}
    \addplot+[no marks, solid, const plot, opacity=1, very thick, NavyBlue] table [x=year, y=price, col sep=tab] {data/fcm.txt};
    \addlegendentry{Price}
    \end{axis}
    \end{tikzpicture}
    \caption{Forward price for existing capacity in Southeastern Massachusetts.}
    \label{fig:fcm_price}
\end{figure}

\subsection{Assumptions}
We make the following simplifying assumptions.
\begin{enumerate}
    \item \textbf{Constant resource parameters:} The capacity and efficiency of all resources stay the same over their lifetimes. In practice, the resources will experience degradation. While the manufacturer of the Nantucket battery guaranteed the nominal capacity over a 20-year lifespan conditional on regular maintenance and respect of a yearly maximum number of discharge cycles, the roundtrip efficiency will degrade, which we account for by using a lifetime average efficiency, similar to Balducci et al. (\citeyear[p.~3.3]{balducci2019nantucket}). We assume that market participation does not cause any additional degradation, including capacity degradation, as long as the yearly cycle limit is respected. In practice, \cite{or2024storage} report that market participation requires one maintenance day per month. Here, we neglect any downtime requirements. These assumptions can be relaxed by adapting problem parameters. For example, battery degradation could be considered more explicitly via positive charging and discharging costs~$\bm p^\mathrm{d}_{\mathrm{s}}$ and~$\bm p^\mathrm{s}_{\mathrm{s}}$.
    \item \textbf{Simplified battery dynamics:} The maximum charge and discharge power and charging and discharging efficiencies are independent of operational parameters. In reality, they would depend on the state-of-charge and temperature, among others. Such dependencies could be partly addressed by maintaining the state-of-charge within a limited range. In addition, we ignore the complementarity constraint between charging and discharging, and check \emph{ex-post} for violations. While there were none in our experiments, they could happen, in principle, when electricity prices are negative.
    \item \textbf{Limited foresight}: Prices, demand, and the occurrence of contingencies are used one day ahead of time for operating decisions. In addition, prices and demand are used one year ahead of time for the allocation of the annual battery discharge budget of 150~cycles and to decide on the fixed state-of-charge target for the beginning of each day. Finally, prices and demand are used up to 26~years ahead of time, the length of our planning horizon, for investment decisions. These assumptions can be relaxed by considering multiple scenarios, at the price of increased computational complexity. On an operational level, this may not be worthwhile because a near-optimal allocation of discharge cycles may be determined~\emph{ex-ante}. In fact, the data analysis in Section~\ref{sec:data} reveals that summer is by far the most promising time of year for storage operations, suggesting that most of the discharge budget should be spent then. The analysis also shows that peak load days tend to be similar and follow each other, suggesting that the state-of-charge target can be based on a few select peak load days.
\end{enumerate}

\subsection{Numerical implementation}
All numerical experiments are conducted on AMD EPYC 9474F CPUs with 48~cores, a 3.6GHz base clock, and 376GB of RAM. Simulations are implemented in Julia 1.11.2
using JuMP 1.23.5 with Gurobi 12.0.2. All code and data are available at \url{https://github.com/mit-shin-group/storage-value}.

\subsection{Experimental setup and results}\label{sec:experiments}
We solve the investment problem~\eqref{pb:tc} formulated as the mixed-integer linear problem~\eqref{pb:P} in Appendix~\ref{sec:opt_formulation} for the case study over a 26~year planning horizon with hourly resolution. We perform nine experiments, varying market participation constraints, available resources for investments, storage costs, storage cycle limits, and forward capacity prices. Table~\ref{tab:cs_results} in Appendix~\ref{apx:results} reports total cost, solve time, mixed-integer programming gaps, operating and capital costs, revenue from capacity payments, investment and operating decisions, yearly discharge cycles, and the minimum supply ratio during scarcity events, defined as the minimum power generation during the event divided by the installed generation capacity of backup generation or storage. Figure~\ref{fig:investment} in Appendix~\ref{apx:results} reports the investment decisions into each resource in each planning period.

Experiment~1 allows for investment in grid expansion only, bars market participation, and results in a total cost of \$678.9~million, which serves as a benchmark for comparison with all other experiments. Experiment~2 allows for grid and storage investments, which reduces grid investments from 160MW to 120MW in exchange for a 19.3MW storage investment. Total costs are reduced by~4.7\%. Market participation is constrained in this experiment, modeling the status quo in Massachusetts, which leads to storage experiencing 7.0 discharge cycles per year on average during contingencies and none during normal operations. Effectively, storage is thus kept sitting idle at a high state-of-charge for the vast majority of the time and discharged only when needed to meet peak electricity demand.

Experiment~4 allows for arbitrage on wholesale markets and participation in the ISO-NE capacity market, which decreases total costs by an additional 4.5\% and increases storage utilization to 150 discharge cycles per year during both normal operations and contingencies, but does not trigger any additional storage investment. As backup generation and storage operate at maximum capacity during scarcity events, neither would pay any penalty for failing to meet capacity supply obligations. This may seem surprising, given that the problem formulation does include explicit information about scarcity events, but can be explained by the high wholesale prices during scarcity events. Under the~2024 load pattern an 8~hour storage duration is thus sufficient to reliably meet capacity supply obligations, which is consistent with ISO-NE assigning an effective load carrying capacity of~1 to storage with a duration of over 2~hours. Since future years will be different from~2024, we perform Experiment~3,  which allows for arbitrage but not capacity market participation. In this case, total costs decrease by~2.0\%, otherwise by~4.5\%.

Experiment~5 evenly splits the 150 yearly cycle budget across each day, which results in a cost reduction of only~2.9\% because almost twice as much storage is needed to reduce peak demand, compared to a~6.7\% reduction in Experiment~3, which optimized the discharge cycle allocation.

Experiments~6 and~7 allow for backup generation investments and arbitrage without and with capacity markets, respectively. Compared to Experiments~3 and~4, backup generation investments increase from~0MW to~22.2MW, while storage investments decrease from 19.3MW to 11.6MW, which reflects the initial investment cost advantage of backup generation over storage, see Figure~\ref{fig:cs_params}. Total costs decrease by an additional 0.5--2.5\%.

Experiments~8 and~9 model deployment if storage were virtually free, \ie, have a capital cost of \$1/kWh. Without market participation, deployment increases from 19.3MW in Experiments~2--4 to 21.3MW. With arbitrage, storage deployment increases to 391.9MW, far greater than the needs for demand reduction. Storage deployment does not increase indefinitely because the gains from market participation do not outweigh the costs of grid build-out.

In the three experiments~(1, 2, 8) without market participation, contingency operations are on average~$0.35$\% cheaper than base-case operations. As the only difference between contingency and base-case operations is a reduced access to the electricity grid, this finding is predicted by Proposition~\ref{prop:gbar}. With market participation, base-case operations are less costly than contingency operations, as established theoretically in  Proposition~\ref{prop:opex}. In Experiments~3--5, contingency operations are no more than~0.1\% more expensive than base-case operations. In Experiment~9, which features virtually free storage, contingency operations are~50\% more expensive than base-case operations. 

Figure~\ref{fig:costs} visualizes the cost savings from storage with and without market participation.
Figure~\ref{fig:hm_supply} shows the supply decisions for each resource type for the year~2025 for Experiments~2 and~3. The figure confirms that, without market participation, backup generation and storage is only used for contingency operations. With market participation, contingency and base-case operations are similar, which explains why their costs are similar. 

\begin{figure}[!t]
    \centering
    \begin{tikzpicture}[font=\tiny]
\begin{axis}[
    width = \textwidth,
    height = 4.5cm,
    bar width = 2cm,
    ybar, ymin = 0, ymax = 5.5, ytick = {0, 1,...,5},
    ylabel={Savings (\%)},
    symbolic x coords={A, B, C},
    xtick={A,B,C},
    xticklabels={
    {Deferred investment},
    {Capacity market},
    {Wholesale arbitrage}
    },
    xticklabel style={align=center}, 
    nodes near coords,
    every node near coord/.append style={
    /pgf/number format/fixed,
    /pgf/number format/precision=1,
    /pgf/number format/zerofill,
    yshift=1pt,
    }, 
    nodes near coords align={vertical},
    xtick pos=bottom,
    ytick pos=left,
    enlarge y limits=false,
    ymin=0,
    ymajorgrids=true,
    legend cell align={left},
]
    \addplot+[ybar, bar shift=0pt] coordinates {(A,4.7)};
    \addlegendentry{Grid value}
    
    \addplot+[ybar, bar shift=0pt] coordinates {(B,2.5) (C,2.0)};
    \addlegendentry{Market value}
\end{axis} 
\end{tikzpicture}
    \caption{Savings from storage investment in distribution grids relative to total investment and operating costs in the absence of storage or backup investment.}
    \label{fig:costs}
\end{figure}

\begin{figure}[!t]
    \centering
    \includegraphics[width=1\linewidth]{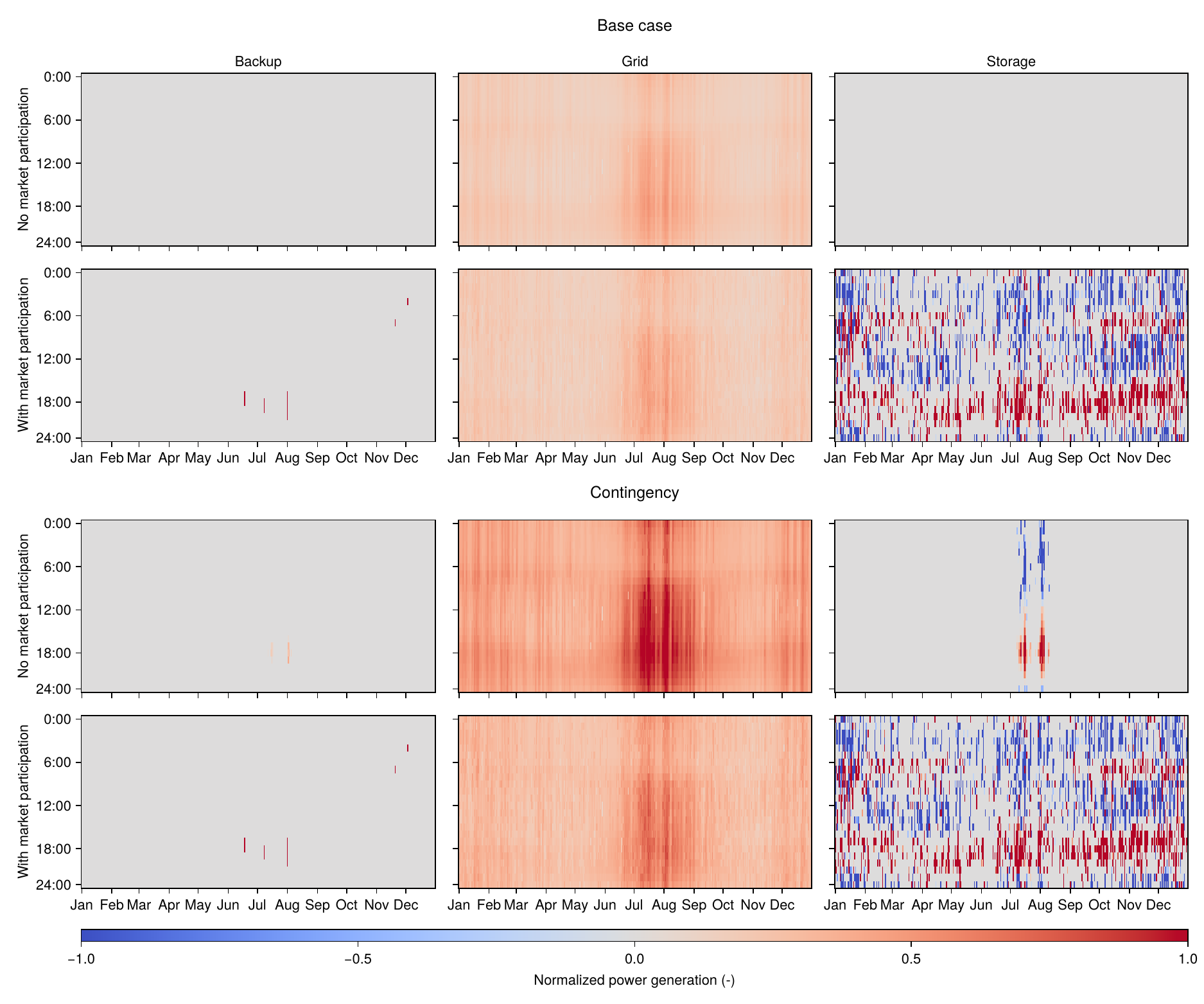}
    \caption{Supply decisions for the year~2025, normalized by installed capacity, for Experiments~2 and~3.}
    \label{fig:hm_supply}
\end{figure}

In summary, the numerical experiments reveal that arbitrage and capacity market participation each increase the cost savings from allowing for storage investment by about~50\%. For Nantucket, this translates into savings of up to \$30~million over 26~years, which could be distributed to rate payers. Under current prices, market participation generates no additional storage investment. Specifically, storage investment is the same in Experiments~2, 3, and 4, and in Experiments~5 and 6. In addition, the trade-off between operating storage for grid support versus market participation is negligible. In fact, with market participation, contingency operations were no more than~$0.1\%$ more expensive than base-case operations in Experiments~3--5, which may be attributed to the correlation between distribution-level electricity demand and Massachusetts-wide wholesale market prices. Finally, under very low costs, market participation does generate additional storage investment, which may be undesirable in deregulated markets because it runs contrary to the idea of separating generation from distribution and transmission.     In this case, the market participation constraint~\eqref{eq:m} can be used to find the appropriate investment level for distribution needs. As the constraint is nonconvex, it increases computational complexity. If that increase is judged excessive, the analytical model in Appendix~\ref{sec:max_peak_shaving} can be used to determine an upper bound on appropriate investment levels.
\section{Discussion and conclusion}\label{sec:conclusion}

We addressed three questions about the value of storage in distribution grids:
\begin{enumerate}
    \item How to model market participation constraints in storage operation and investment planning;
    \item How do profits from market participation compare to savings from reducing peak demand;
    \item Would market participation generate storage investments that go beyond distribution needs, and if so, how can this be detected? 
\end{enumerate}

Equation~\eqref{eq:m} models market participation constraints by limiting the supply from non-grid resources, such as storage, to the shortfall of grid capacity relative to electricity demand. We integrate these constraints into an optimization problem that determines investment decisions and mimics integrated resource planning by distribution grid companies. As the constraints are nonconvex, they increase the computational complexity of the problem, raising questions about tractability. In a Massachusetts case study, we find that problems with a 26~year horizon and hourly resolution can be solved within several hours on current compute servers.  

The case study further reveals that arbitrage and capacity market participation each generate about~50\% of the capital cost savings from reducing or deferring grid investments. We determine storage investment levels appropriate for serving local distribution needs by solving the planning problems with the market participation constraints. We find that under current technology costs, market participation does \emph{not} generate any storage investment that go beyond distribution needs. 

Battery storage can thus be used more efficiently by providing distribution grid services and participating in electricity markets. These efficiency gains could be realized by (\emph{i})~allowing and incentivizing distribution companies to participate in electricity markets, (\emph{ii})~incentivizing distribution companies to contract third-party storage that also participates in electricity markets, (\emph{iii})~creating a market for distribution grid services, potentially relying on nodal pricing in distribution networks~\citep{sotkiewicz2006nodal}, or a combination of these approaches. We now discuss how our findings inform each of these options.

Regarding market participation of storage owned by distribution companies, our model can be used to estimate the value of such a policy change. In the past, the prospect of reduced or deferred grid investments was large enough to cast aside fears about market distortion, and regulators allowed storage investment for distribution needs. Market participation promising similar savings may prompt regulators to reconsider existing policies that restrict market participation for storage assets owned by distribution companies. There is a risk that allowing for market participation would incentivize distribution companies to invest in storage solely for serving the market. Under current market conditions, we find that such investments are not profitable. Even if they were profitable, our model could be used to audit proposed investments and limit them to levels appropriate for addressing distribution needs. Jurisdictions that have already allowed market participation, \eg, New York and Maryland, may find our model useful to audit proposed storage investments.

Regarding distribution companies contracting third-party storage, our model can be used to inform contract terms, \eg, storage quantities, prices, and commitment durations. The substantial savings from deferring grid investments shown in our case study suggest that such contracts could be beneficial to both distribution companies and storage developers, which may make previously unprofitable storage projects viable. 

A market for distribution grid services could in theory be organized similarly to existing capacity markets on the transmission level. However, each distribution grid would likely have far fewer sellers than a capacity market, which may create concerns about liquidity and market power. Our model could be used to inform the terms of the capacity auctions in such a market, \eg, price caps and product durations.

\textbf{Acknowledgements:} We thank Ruby Aidun and Dr.~John Parsons at the MIT Center for Energy and Environmental Policy Research for guidance on US distribution grid policy, Prof.~Paul Joskow at MIT Economics for guidance on electricity markets, the MIT Future Energy Systems Center for funding, the MIT Office of Research Computing and Data for providing high performance computing resources, and Finn Ye, Terron Hill, Colette Lamontagne from National Grid and Dr.~Iason Iraklis Avramidis from National Grid Partners for guidance on distribution grid best practices.

\addcontentsline{toc}{section}{References (Main Text)}
\small
\linespread{1}\selectfont
\putbib[_mybib]
\end{bibunit}

\newpage

\normalsize
\appendix
\begin{bibunit}[abbrvnat]
\renewcommand{\theequation}{\Alph{section}\arabic{equation}}
\renewcommand{\thefigure}{\Alph{section}\arabic{figure}} 
\renewcommand{\thetable}{\Alph{section}\arabic{table}}
\renewcommand{\theProp}{\Alph{section}\arabic{Prop}}
\setcounter{equation}{0}
\setcounter{Prop}{0}
\setcounter{table}{0}
\setcounter{figure}{0}
\section{Data}\label{apx:data}
\subsection{Installed US electricity storage}\label{apx:US_storage}
Based on the preliminary monthly electric generator inventory by the US Energy Information Administration for December~2025, available at \url{https://www.eia.gov/electricity/data/eia860m/}, the total operating nameplate capacity of the US electricity sector was 1349.6GW, out of which 42.6GW was batteries, out of which 4.3GW were owned by utilities. Utility-owned battery storage thus accounts for about 0.3\% of total US generation capacity.

\subsection{Battery costs and case study parameters}
\begin{figure}[h]
    \centering
    \includegraphics[width=0.5\linewidth]{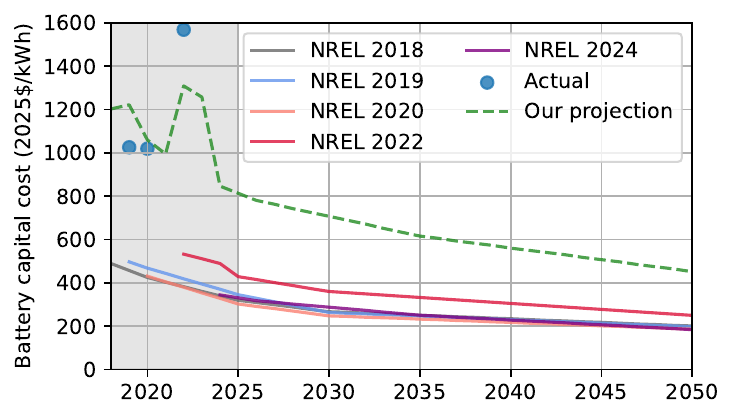}
    \caption{Projected and actual battery costs per kWh of energy capacity (in 2025 USD).
      Solid lines are NREL projections, blue dots are actual distribution grid battery projects in Provincetown and Nantucket, both MA, and Ponoma, NY. The dashed green line is our projection based on the factor by which actual costs exceed NREL projections. The mean storage investment cost is \$604/kWh, \ie, \$4832/kW for 8h~duration.}
    \label{fig:battery_capital_cost}
\end{figure}

\begin{figure}[htbp]
    \centering
    \includegraphics[width=\textwidth]{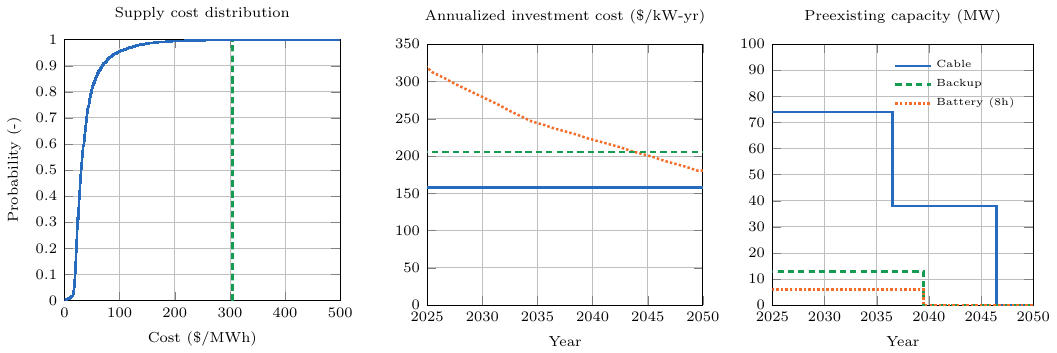}
    \caption{Case study generation cost, investment cost, and preexisting capacity.}
    \label{fig:cs_params}
\end{figure}

\subsection{Maximum peak shaving potential}\label{sec:max_peak_shaving}
It is useful to assess upper bounds on yearly storage investment and total installed storage capacity to determine the constant~$\bar x_\mathrm{s}$ in the investment planning problem~\eqref{pb:tc}. In this section, we will show how these bounds can be estimated via a simplified linear program.

Consider a storage device with infinite power and energy capacity and an infinite grid connection used to fully flatten electric load. The flattened load in any operating period~$j \in \set{J}$ of planning period $n \in \set{N}$ is given by
\begin{equation}\label{pb:FL}\tag{FL}
    y^\star_{\ell n j}(\eta^\mathrm{c} \eta^\mathrm{d}) = \min \, \max_{k \in \set{K}} \, \bar y_{\ell n j k} - y^\mathrm{s}_{\mathrm{s}njk} + y^\mathrm{d}_{\mathrm{s}njk}~~\mathrm{s.t.}~~\sum_{k \in \set{K}} \eta^\mathrm{c} \eta^\mathrm{d} y^\mathrm{d}_{\mathrm{s}njk} - y^\mathrm{s}_{\mathrm{s}njk} \geq 0, \, \bm y^\mathrm{d}_{\mathrm{s}nj}, \bm y^\mathrm{s}_{\mathrm{s}nj} \geq 0,
\end{equation}
which can be formulated as a linear program that only depends on roundtrip efficiency and load.

\begin{Prop}\label{prop:potential}
    The flattened load function is convex nonincreasing and ranges from $\max_{k \in \set{K}}  \bar y_{\ell njk}$ for $\eta^\mathrm{c} \eta^\mathrm{d} = 0$ to $\frac{1}{K}\sum_{k \in \set{K}} \bar y_{\ell njk}$ for $\eta^\mathrm{c} \eta^\mathrm{d} = 1$.
\end{Prop}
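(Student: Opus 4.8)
The plan is to bypass the bilinear dependence of problem~\eqref{pb:FL} on the roundtrip efficiency $\rho \coloneq \eta^\mathrm{c}\eta^\mathrm{d}$ by first reducing the linear program to an explicit threshold characterization. Fix a planning period $n$ and operating period $j$ and write $\bar y_k \coloneq \bar y_{\ell njk}$, $\bar\mu \coloneq \frac{1}{K}\sum_k \bar y_k$, and $\bar M \coloneq \max_k \bar y_k$. For a candidate flattened level $L$, define the peak excess $S_+(L) \coloneq \sum_k (\bar y_k - L)^+$ and the valley deficit $S_-(L) \coloneq \sum_k (L - \bar y_k)^+$, both convex, piecewise linear, and monotone in $L$. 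First I would show that a level $L$ is achievable if and only if $S_+(L) \le \rho S_-(L)$: the cheapest schedule capping the net load at $L$ discharges exactly $S_+(L)$ at the peaks and may charge up to $S_-(L)$ at the valleys without creating a new peak, so the energy balance $\rho\sum_k y^\mathrm{d}_{\mathrm{s}njk} \ge \sum_k y^\mathrm{s}_{\mathrm{s}njk}$ is satisfiable exactly when $S_+(L)/\rho \le S_-(L)$. Hence $y^\star_{\ell nj}(\rho) = \min\{L : S_+(L) \le \rho S_-(L)\}$, and since feasibility is upward closed in $L$, this is the level at which the inequality binds.

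The endpoints then follow from the elementary identity $S_-(L) - S_+(L) = K(L - \bar\mu)$ (from $a^+ - (-a)^+ = a$). At $\rho = 0$ the condition forces $S_+(L) = 0$, i.e. $L \ge \bar M$, giving $y^\star = \bar M$; at $\rho = 1$ it reads $S_-(L) \ge S_+(L)$, i.e. $L \ge \bar\mu$, giving $y^\star = \bar\mu$. Monotonicity is immediate: enlarging $\rho$ relaxes $S_+(L) \le \rho S_-(L)$, so the threshold $y^\star(\rho)$ is nonincreasing.

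For convexity I would avoid differentiating the primal value directly and instead invert the problem. On the nondegenerate range $L \in (\bar\mu, \bar M)$ the binding level satisfies $\rho = g(L) \coloneq S_+(L)/S_-(L)$, which by the identity equals $1 - K\,h(L)$ with $h(L) \coloneq (L - \bar\mu)/S_-(L)$; moreover $g$ is a continuous strictly decreasing bijection from $(\bar\mu,\bar M)$ onto $(0,1)$, so $y^\star = g^{-1}$. Because a strictly decreasing bijection and its inverse have epigraphs that are reflections of one another across the diagonal, and reflection is affine, $g^{-1}$ is convex if and only if $g$ is convex; it thus suffices to prove $h$ concave. I would establish this by slope monotonicity. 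On each linearity piece, where $m \coloneq |\{k : \bar y_k < L\}|$ and $B \coloneq \sum_{k:\bar y_k<L}\bar y_k$ are constant, a short computation gives $h'(L) = (m\bar\mu - B)/S_-(L)^2$ with nonnegative numerator (the mean of the below-$L$ loads is at most $\bar\mu$, since every load at or above $L \ge \bar\mu$ is itself at least $\bar\mu$); as $S_-$ increases, $h'$ decreases within the piece. Crossing a breakpoint at a load value $\hat y > \bar\mu$ raises $m$ and $B$ so that the numerator drops by $\hat y - \bar\mu > 0$ while the denominator stays continuous, so $h'$ jumps down. Hence $h'$ is nonincreasing throughout, $h$ is concave, $g$ is convex, and $y^\star$ is convex.

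The main obstacle is the convexity step: the efficiency multiplies a decision variable inside problem~\eqref{pb:FL}, so neither convex combination of primal solutions nor the dual value function yields convexity directly, both running into the same bilinear term. The decisive simplification is the reduction to the threshold characterization together with the decreasing-bijection principle, which trades the awkward parametric linear program for the one-dimensional concavity of $h$; within that, the only genuinely nontrivial point is verifying that the slope of $h$ does not increase across the kinks, which hinges on the fact that every load value crossed on $(\bar\mu,\bar M)$ exceeds the mean. Degenerate cases (all loads equal, so $\bar\mu = \bar M$) make $y^\star$ constant and are handled separately.
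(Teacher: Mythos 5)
Your proof is correct, but it follows a genuinely different route from the paper's. The paper works on the dual side: it dualizes~\eqref{pb:FL} into $\max \sum_{k \in \set{K}} \bar y_{\ell njk}\mu_k$ subject to $\sum_{k \in \set{K}} \mu_k = 1$, $\rho\lambda \leq \mu_k \leq \lambda$, $\lambda \geq 0$ (writing $\rho \coloneq \eta^\mathrm{c}\eta^\mathrm{d}$), observes that optimal dual vertices put $\mu_k = \lambda$ on the $m$ largest loads and $\mu_k = \rho\lambda$ on the rest, and thus expresses the value function as a pointwise maximum over $m$ of rational functions $\varphi_m(\rho)$; a derivative computation shows each $\varphi_m$ is convex nonincreasing, so the maximum is too, and strong duality transfers this to the primal. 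You stay entirely on the primal side: your threshold characterization $y^\star(\rho) = \min\{L : S_+(L) \leq \rho S_-(L)\}$ is valid precisely because~\eqref{pb:FL} couples time only through the aggregate energy balance (there are no intra-day state-of-charge constraints), the endpoints and monotonicity then drop out of the identity $S_-(L) - S_+(L) = K(L - \bar\mu)$, and convexity comes from inverting $\rho = g(L) = S_+(L)/S_-(L)$ together with the principle that a decreasing bijection is convex iff its inverse is, reducing everything to concavity of $h(L) = (L-\bar\mu)/S_-(L)$, which your kink-by-kink slope argument establishes correctly. Your route is more elementary (no duality, no vertex enumeration) and yields an explicit water-filling description of the optimum, which is also what the paper needs afterwards to compute the required power and energy capacities; its price is that convexity, which the paper gets for free as a pointwise maximum of convex functions, is concentrated in the one-dimensional concavity of $h$. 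Two details deserve tightening: the reflection-of-epigraphs step should be restricted to the box $(\bar\mu, \bar M) \times (0,1)$, since the unrestricted epigraph of $g$ reflects to points outside the domain of $g^{-1}$ (or simply verify directly from the definition of convexity, using monotonicity of $g$, that $g$ decreasing and convex implies $g^{-1}$ convex); and your convexity argument lives on the open interval $\rho \in (0,1)$, so you should add that $g^{-1}(\rho) \to \bar M$ as $\rho \to 0^+$ and $g^{-1}(\rho) \to \bar\mu$ as $\rho \to 1^-$, i.e., $y^\star$ is continuous at the endpoints, which extends convexity to the closed interval $[0,1]$.
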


\begin{proof}
    Problem~\eqref{pb:FL} admits the dual formulation
    \begin{equation*}
        \max_{\lambda, \, \bm \mu} \,
        \sum_{k \in \set{K}} \bar y_{\ell njk} \mu_k
        ~~ \mathrm{s.t.} ~~
        \sum_{k \in \set{K}} \mu_k = 1, \,
        \eta^\mathrm{c} \eta^\mathrm{d} \lambda \leq \mu_k \leq \lambda ~~ \forall k \in \set{K}, \, \lambda \geq 0.
    \end{equation*}
    Since the objective function is linear and the feasible set a polyhedron, there exists an optimal solution at a vertex of the polyhedron. Let $\bar{\bm y}$ be ordered such that $\bar y_{\ell nj1} \geq \ldots \geq \bar y_{\ell njK}$. Then, there exists an optimal solution of the form $\mu_{1,\ldots,m} = \lambda$, $\mu_{m+1,\ldots,K} = \eta^\mathrm{c} \eta^\mathrm{d} \lambda$, and $\lambda = \frac{1}{m + (K-m)\eta^\mathrm{c} \eta^\mathrm{d}}$, \ie, at a vertex of the box constraints that satisfies the sum constraint, for some $m \in [1, K] \cap \mathbb{Z}$. Let
    \begin{equation*}
        \varphi_m(\eta^\mathrm{c} \eta^\mathrm{d})
        \coloneq
        \frac{\sum_{i=1}^m \bar y_{\ell nji} + \eta^\mathrm{c} \eta^\mathrm{d} \sum_{i=m+1}^K \bar y_{\ell nji}}
        {
        m + (K-m) \eta^\mathrm{c} \eta^\mathrm{d}
        }.
    \end{equation*}
    The optimal value of the dual problem as a function of~$\eta^\mathrm{c} \eta^\mathrm{d}$ is thus
    \begin{equation*}
        \varphi(\eta^\mathrm{c} \eta^\mathrm{d})
        \coloneq
        \max_{m} \,
        \varphi_m(\eta^\mathrm{c} \eta^\mathrm{d})
        ~~ \mathrm{s.t.} ~~
        m \in [1, K] \cap \mathbb{Z}.
    \end{equation*}
    Evaluating first derivatives, we find
    \begin{equation*}
        \varphi_m(\eta^\mathrm{c} \eta^\mathrm{d})'
        =
        \frac{m \sum_{i = m+1}^K \bar y_{\ell nji} - (K-m) \sum_{i=1}^m \bar y_{\ell nji}}{(m + (K-m)\eta^\mathrm{c} \eta^\mathrm{d})^2},
    \end{equation*}
    which is nonpositive because
    \begin{equation*}
        \frac{\sum_{i = m+1}^K \bar y_{\ell nji}}{K-m}
        \leq
        \frac{\sum_{i = 1}^m \bar y_{\ell nji}}{m},
    \end{equation*}
    as $\bar y_{\ell nj1} \geq \ldots \geq \bar y_{\ell njK}$, and thus increasing in~$\eta^\mathrm{c} \eta^\mathrm{d}$. The functions~$\varphi_m$ are therefore convex nonincreasing and so is their pointwise maximum~$\varphi$. In addition, $\varphi(0) = \max_{k \in \set{K}} \bar y_{\ell njk}$ and $\varphi(1) = \frac{1}{K}\sum_{k \in \set{K}} \bar y_{\ell njk}$. The dual problem thus admits finite optimal values and strong linear programming duality holds. Therefore, the optimal value functions of the primal and dual problems coincide. 
\end{proof}

For any given roundtrip efficiency and load profile, we can compute the power and energy capacity required to fully flatten load based on the optimizers to problem~\eqref{pb:FL} as
\begin{enumerate}
    \item $\max_{k \in \set{K}} \left\{ y^\mathrm{s\star}_{\mathrm{s}njk}, \, y^\mathrm{d\star}_{\mathrm{s}njk} \right\} $ for power capacity and
    \item $\max_{k \in \set{K}} \Delta t \sum_{l = 1}^k \left( \eta^\mathrm{c} y^\mathrm{d\star}_{\mathrm{s}njk} - \frac{y^\mathrm{s\star}_{\mathrm{s}njk}}{\eta^\mathrm{d}} \right) 
    - \min_{k \in \set{K}} \sum_{l = 1}^k \left( \eta^\mathrm{c} y^\mathrm{d\star}_{\mathrm{s}njk} - \frac{y^\mathrm{s\star}_{\mathrm{s}njk}}{\eta^\mathrm{d}} \right)$ for energy capacity.
\end{enumerate}
Figure~\ref{fig:potential} shows the peak-shaving potential for the load profile in Figure~\ref{fig:nantucket}. For typical battery roundtrip efficiencies of~85\% to~95\%, the flattened load will be about 1\% higher than the average load and require a power capacity of~30\% of the average load and a storage duration of~$6.5$~hours.
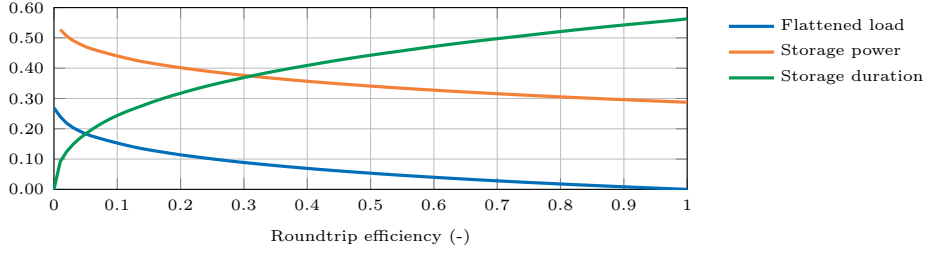
\begin{figure}[!t]
\centering
 \begin{tikzpicture}[font=\tiny]
    \begin{axis}[
        width=10cm, 
        height=4cm, 
        enlarge x limits=false,
        ymin = 0, ymax = 0.6,
        ytick distance = 0.1,
        yticklabel style={
        /pgf/number format/.cd,
        fixed, zerofill, precision=2
        },
        xticklabel style={
        /pgf/number format/1000 sep={}
    },
        scaled y ticks=false,
        xmin = 0, xmax = 1,
        xtick distance=0.1,
        xlabel={Roundtrip efficiency (-)},
        grid=both, 
        legend style={
        at={(1.05, 1)},
        anchor = north west,
        cells={anchor=west},
        draw=none,
        fill=none,
        font=\tiny
        }, 
    ]
    \addplot+[no marks, solid, opacity=1, very thick, NavyBlue] table [x=nsq, y=new_peak, col sep=comma] {data/potential.txt};
    \addplot+[no marks, solid, opacity=1, very thick, Orange] table [x=nsq, y=power, col sep=comma] {data/potential.txt};
    \addplot+[no marks, solid, opacity=1, very thick, ForestGreen] table [x=nsq, y=duration, col sep=comma] {data/potential.txt};
    \legend{
         Flattened load, Storage power, Storage duration 
    }
    \end{axis}
    \end{tikzpicture}
    \caption{Flattened load (normalized by the deviation from average load), storage power (normalized by average load), and storage duration (normalized by 12~hours).}
    \label{fig:potential}
\end{figure}

\subsection{Sources and assumptions}\label{sec:sources}
The data used in the case study is obtained from the following sources:
\begin{itemize}
    \item \textbf{Battery cost}: NREL's cost projections for utility-scale battery storage \citep{nrel2019update, nrel2020update, nrel2021update, nrel2025update, nrel2023update};
    \item \textbf{Inflation}: A consumer price index (\url{https://www.rateinflation.com/consumer-price-index/usa-historical-cpi/}) to adjust price data from different years;
    \item \textbf{Existing distribution-grid batteries in the Northeastern US}:
    \begin{itemize}
        \item Nantucket, MA, with an estimated investment cost of $\$33$~million for the 6MW/48MWh battery commissioned in 2019, $\$35.6$~million for the combustion turbine generator~\citep[p.~3.3]{balducci2019nantucket}, and total actual costs of $\$81$~million~\citep{gheorghiu2019nationalgrid};
        \item Provincetown, MA, with a reported investment cost of~\$54.8~million for a 25MVA/38MWh battery commissioned in 2022 \citep[Fig.~34]{eversource2024gridmodernization};
        \item Ponoma, NY, with a reported cost of $\$9.2$~million for a 3MW/12MWh battery commissioned in 2020~\citep[p.~3]{or2021cost}.
    \end{itemize}
    \item \textbf{Electricity price and demand for Nantucket}: The ISO-NE's webportal (\url{https://www.iso-ne.com/isoexpress/web/reports/load-and-demand/-/tree/nodal-load-weights}) from network node \texttt{LD.CANDLE 13.2} with ID \texttt{16255} in load zone \texttt{4006} for hourly load and price profiles for the year 2024. We assume that the price profile is identical in each planning period. In reality, it will be different and efforts are made to project future price patters, \eg, via NREL \href{https://www.nrel.gov/analysis/cambium.html}{Cambium}. We do not use these because they currently underestimate price variability~\citep[p.~5]{seel2021cambium}, an important determinant of arbitrage profitability. In addition, we assume that future load follows the same pattern as in~2024 and is linearly scaled by the increase in peak load. This is unrealistic because we expect shifts in electricity usage patterns. While distribution companies project these shifts, the underlying data was not included in National Grid's electric sector modernization plan;
    \item \textbf{Peak load projections through 2050}: National Grid's electric sector modernization plan~\citep[2023 to 2050 Electric Peak (MW) Forecast, p.~62]{nationalgrid2024ESMP}. We consider a single load scenario, multiple scenarios could be considered via stochastic programming~\citep{basciftci2024adaptive};
    \item \textbf{Backup generation efficiency}: Caterpillar (\url{https://s7d2.scene7.com/is/content/Caterpillar/CM20150703-52095-43744}) reports a heat rate of 10.4 MJ/kW-hr at temperatures of around 30$^\circ$C for their combustion turbine generators. This heat rate is accurate during hot summer afternoons, \ie, when backup generation is needed most; 
    \item \textbf{Diesel fuel costs for backup generation}: The Energy Information Administration reports \$4 per gallon, \ie, about \$1.057 per liter as of 9 September 2025 (\url{https://www.eia.gov/petroleum/gasdiesel/});
    \item \textbf{The lower heating value of diesel fuel}: An online engineering manual reports 36MJ/l (\url{https://www.engineeringtoolbox.com/fuels-higher-calorific-values-d_169.html});
    \item \textbf{Capacity prices and scarcity events}: The ISO-NE webportal for capacity prices (\url{https://www.iso-ne.com/about/key-stats/markets#fcaresults}) and scarcity events (\url{https://www.iso-ne.com/isoexpress/web/reports/auctions/-/tree/fcm-hist-csc});
    \item \textbf{Effective load carrying capacities}: ISO-NE's market monitor reports effective load carrying capacities, \ie, discount factors applied to resources participating in capacity markets, of~1 for storage with a duration of 2h or longer and for combustine turbine backup generation~\citep[p.~64]{potomac2022isonemonitor}, which means that these resources can be credited for their full nominal power supply capacity. 
\end{itemize}

Table~\ref{tab:cs_params} lists all case study parameters with references.

{%
\centering
\tiny
\begin{longtable}{llcrl}
    \caption{Case study parameters.} \\
    \label{tab:cs_params}
    Parameter & Resource & Symbol & Value & Reference/Note \\
    \midrule
    \endhead
    \bottomrule
    \endlastfoot
    Supply resources & & $\set{R}$ & $\{ \mathrm{b}, \mathrm{g}, \mathrm{s}\}$ & b: backup, g: grid, s: storage.\\
    Demand resources & & $\set{D}$ & $\{ \mathrm{g}, \ell, \mathrm{s} \}$ & g: grid, $\ell$: electric load, s: storage.\\
    Planning periods & & $\set{N}$ & $\{2025, \ldots, 2050\}$ & Yearly resolution. \\
    \# of planning periods & & $N$ & $\vert \set{N} \vert$ \\
    Contingency cases & & $\set{C}$ & $\{0,1\}$ & 0: no contingency, 1: largest installed cable fails. \\
    \# of contingencies & & $C$ & $\vert C \vert$ \\
    Operating periods & & $\set{J}$ & $\{1, \ldots, 365\}$ & Days per planning period. \\
    \# of operating periods & & J & $\vert \set{J} \vert$ \\
    Operating subperiods & & $\set{K}$ & $\{1,\ldots,24\}$ & Hours per operation period.\\
    \# of subperiods & & $K$ & $\vert \set{K} \vert $ \\
    Time discretization & & $\Delta t$ & 1h & Resolution of available ISO-NE load and price data.\\
    Time without contingency & & $\bm T_\mathrm{0}$ & 0.8h & Per subperiod. \\
    Time with contingeny & & $\bm T_\mathrm{1}$ & 0.2h & Per subperiod. \\  
    Discount rate & & & 0 & \\
    Maximum demand & Load & $\bar y_\ell$ & Fig.\ref{fig:Ntkt2024} & Hourly load (Fig.\ref{fig:Ntkt2024}) is scaled linearly. \\
    &&&& with yearly peak load evolution (Fig.\ref{fig:nantucket}).\\
    Charging efficiency & Storage &  $\eta^\mathrm{c}$ & 0.913 & \citep[p.3.3]{balducci2019nantucket} \\
    Discharging efficiency & Storage & $\eta^\mathrm{d}$ & 0.913 & \citep[p.3.3]{balducci2019nantucket} \\
    Maximum discharge cycles & Storage & $C^\mathrm{s}$ & 150 cycles/yr & \citep[p.3.2]{balducci2019nantucket}\\  
    Duration & Storage & $T^\mathrm{s}$ & 8h & Same as existing battery~\citep[p.3.2]{balducci2019nantucket}.\\
    Minimum investment & Backup & $\ubar x_\mathrm{b}$ &  2MW & Assumed similar threshold to storage.\\
    & Grid & $\ubar x_\mathrm{g}$ & 40MW & Existing cables are 36MW and 38MW installed in 1996 and 2006,  \\
    &&&& \citep{bluedotliving2024nantucket}. \\
    & Storage & $\ubar x_\mathrm{s}$ & 2MW & \cite{or2021cost} installed a 3MW battery.\\
    Maximum investment & Backup & $\bar x_\mathrm{b}$ & 30MW & Assumed to be twice the currently installed backup capacity.\\
    & Grid & $\bar x_\mathrm{g}$ & 40MW & Assumed to be the same as the minimum investment. \\
    & Storage & $\bar x_\mathrm{s}$ & 24MW & Maximum peak shaving potential (Sec.\ref{sec:max_peak_shaving}). \\
    Existing units & Backup & $\set{I}_\mathrm{b}$ & $\{1\}$ & \citep[p.3.4]{balducci2019nantucket} \\  
    & Grid & $\set{I}_\mathrm{g}$ & $\{1,2\}$ & \citep{bluedotliving2024nantucket} \\
    & Storage & $\set{I}_\mathrm{s}$ & $\{1\}$ & \citep[p.3.2]{balducci2019nantucket} \\
    Lifetime & Backup & $N_\mathrm{b}$ & 20yr & \citep[p.3.5]{balducci2019nantucket} \\
    & Grid & $N_\mathrm{g}$ & 40yr & \citep{marthasvineyard2023faulty} \\
    & Storage & $N_\mathrm{s}$ & 20yr & \citep[p.3.2]{balducci2019nantucket} \\
    Existing capacity & Backup & $\bm x^0_\mathrm{b}$ & Fig.\ref{fig:cs_params} & \citep[p.3.4]{balducci2019nantucket} + lifetime assumption.\\
    & Grid & $\bm x^0_\mathrm{g}$ & Fig.\ref{fig:cs_params} & \citep{bluedotliving2024nantucket} + lifetime assumption.\\
    & Storage & $\bm x^0_\mathrm{s}$ & Fig.\ref{fig:cs_params} & \citep[p.3.2]{balducci2019nantucket} + lifetime assumption.\\
    Investment costs & Backup & $\bm p_\mathrm{b}$ & Fig.\ref{fig:cs_params} &  PNNL and actual project costs (Sec.\ref{sec:sources}).\\ 
    & Grid & $\bm p_\mathrm{g}$ & Fig.\ref{fig:cs_params} & 
    40MW cable would have cost $\$200$ million in 2019~\citep{gheorghiu2019nationalgrid}. \\
    & Storage & $\bm p_\mathrm{s}$ & Fig.\ref{fig:cs_params} & PNNL and actual project costs (Sec.\ref{sec:sources}).\\
    Investment fix costs & Backup & $\bm p^0_\mathrm{b}$ & 0 & Considered through minimum investment threshold.\\ 
    & Grid & $\bm p^0_\mathrm{g}$ & 0 & Considered through minimum investment threshold.\\
    & Storage & $\bm p^0_\mathrm{s}$ & 0 & Considered through minimum investment threshold. \\
    Capacity prices & Backup & $\bar{\bm p}_\mathrm{b}$ & Fig.\ref{fig:fcm_price} & \url{https://www.iso-ne.com/about/key-stats/markets#fcaresults}. \\
    & Grid & $\bar{\bm p}_\mathrm{g}$ & 0 & Capacity payment applies only to generation. \\
    & Storage & $\bar{\bm p}_\mathrm{s}$ & Fig.\ref{fig:fcm_price} & \url{https://www.iso-ne.com/about/key-stats/markets#fcaresults}. \\
    Supply cost & Backup & $\bm p^\mathrm{s}_\mathrm{b}$ & \$305/MWh & See Sec.\ref{sec:sources} and Fig.\ref{fig:cs_params} \\
    & Grid & $\bm p^\mathrm{s}_\mathrm{g}$ & Figs.\ref{fig:Ntkt2024}\&\ref{fig:cs_params} & Same as the 2024 cost, available from ISO-NE (Sec.\ref{sec:sources}).\\
    & Storage & $\bm p^\mathrm{s}_\mathrm{s}$ & 0 & Could be set to nonzero to account for degradation.\\
    Demand revenue & Grid & $\bm p^\mathrm{d}_\mathrm{g}$ & Fig.\ref{fig:Ntkt2024} & Same as supply cost.\\
    & Load & $\bm p^\mathrm{d}_\ell$ & 0 & if load shedding is not allowed,\\
    &&& \$$9,337$/MWh& if load shedding is allowed \citep[p.70]{potomac2025isonemonitor}.\\
    & Storage & $\bm p^\mathrm{d}_\mathrm{s}$ & 0 & Could be set to nonzero to account for degradation.\\
    Maximum solution time & & & $14,400$s & \url{docs.gurobi.com/projects/}, \\
    &&&& \url{optimizer/en/current/reference/parameters.html\#timelimit}. \\
    Maximum MIPGap & & & $10^{-5}$ & Relative mixed-integer optimality gap: \url{docs.gurobi.com/projects/}, \\
    &&&& \url{optimizer/en/current/reference/parameters.html\#mipgap}. \\
\end{longtable}
}%

\setcounter{equation}{0}
\setcounter{Prop}{0}
\setcounter{table}{0}
\setcounter{figure}{0}
\section{Optimization problem}\label{sec:opt_formulation}
\subsection{Decision variables}
{%
\centering
\tiny
\begin{longtable}{lllll}
    \caption{Decision variables.} \\
    \label{tab:dec_vars}
    Variable & Symbol & Space & Dimension & Note \\
    \midrule
    \endhead
    Supply investment & $\bm x$ & $\mathbb{R}^{ \vert \set{R} \vert N}_+$ & Power & Capacity of type $r \in \set{R}$ becoming available at the start of \\
    &&&&planning period $n \in \set{N}$. \\
    Largest investment & $\bm x^\mathrm{max}$ & $\mathbb{R}^{N}_+$ & Power & Largest grid investment that is still live during planning period $n \in \set{N}$. \\
    Investment indicator & $\bm z$ & $\{0,1\}^{\vert \set{R} \vert N}$ &  None & $=1$ if $x_{rn} > 0$ for $r \in \set{R}$ and $n \in \set{N}$, $=0$ otherwise.\\
    Supply capacity & $\bm x^\mathrm{tot}$ & $\mathbb{R}^{\vert \set{R} \vert N}_+$ & Power & Installed capacity of type $r \in \set{R}$ available during planning period $n \in \set{N}$.\\
    Supply operation & $\bm y^\mathrm{s}$ & $\mathbb{R}^{\vert \set{R} \vert NJKC}_+$ & Power & Supply of type $r \in \set{R}$ during subperiod $k \in \set{K}$, in operating\\
    &&&& period $j \in \set{J}$, in planning period $n \in \set{N}$, in contingency case $c \in \set{C}$.\\
    Demand operation & $\bm y^\mathrm{d}$ & $\mathbb{R}^{\vert \set{D} \vert NJKC}_+$ & Power & Same as above for demand.\\
    State-of-charge & $\bm y$ & $\mathbb{R}^{NJKC}_+$ & Energy & State-of-charge at the beginning of subperiod $k \in \set{K}$, in operating\\
    &&&& period $j \in \set{J}$, in planning period $n \in \set{N}$, in contingency case $c \in \set{C}$.\\
    State-of-charge target & $\bm y^0$ & $\mathbb{R}^N_+$ & Energy & State-of-charge target in planning period~$n \in \set{N}$. \\
    Operating indicator & $\bm z^\mathrm{M}$ & $\mathbb{R}^{NJKC}$ & None & $=1$ if $y^\mathrm{d}_{\ell njkc} > x^\mathrm{tot}_{\mathrm{g}nc}$ in subperiod $k \in \set{K}$, in operating period $j \in \set{J}$, \\
    &&&&  in planning period $n \in \set{N}$, in contingency case $c \in \set{C}$; $= 0$ otherwise.\\
    \bottomrule
    \endlastfoot
\end{longtable}
}%
\subsection{Auxiliary functions}
\begin{equation*}
    \ubar n(n, N) = \max\{1, n - N + 1\}
\end{equation*}
\subsection{Full formulation}
We introduce epigraphical variables for the capacity functions~$\bar x(\cdot)$ and state-of-charge variables to increase sparsity at the expense of a greater number of decision variables and constraints.
\subsubsection{With full market participation}
\begin{mini!}|s|[2]<b>
    {}
    {\sum_{n \in \set{N}} \left( \sum_{r \in \set{R}} p_{rn} x_{rn} + p^0_{rn} z_{rn} - \bar p_{rn} x^\mathrm{tot}_{rn} \right) + \sum_{c \in \set{C}} T_{c} \sum_{j \in \set{J}} \sum_{k \in \set{K}} \left( \sum_{r \in \set{R}} p^\mathrm{s}_{rnjk} y^\mathrm{s}_{rnjkc} - \sum_{r \in \set{D}} p^\mathrm{d}_{rnjk} y^\mathrm{d}_{rnjkc} \right)\label{P:obj}}
    {\label{pb:P}}
    {}
    \addConstraint{
        \bm x_{r}, \bm x^\mathrm{tot}_r \in \mathbb{R}^N_+, \bm y^\mathrm{s}_r \in \mathbb{R}^{N J K C}_+, \bm z_r \in \{0,1\}^N,
    }{}{
    ~ \forall r \in \set{R},
    \label{P:varsR}
    }
    \addConstraint{
        \bm y^\mathrm{d}_r \in \mathbb{R}^{N J K C}_+,}{}{
        ~ \forall r \in \set{D},
        \label{P:varsD}
        } 
    \addConstraint{
        \bm x^\mathrm{max} \in \mathbb{R}^{N}_+, \, \bm y \in \mathbb{R}^{N J K C}_+, \bm y^0 \in \mathbb{R}^{N}_+,
    }{}{
    \label{P:varsC}
    }
    \addConstraint{
        \ubar x_r z_{rn} \leq x_{rn} \leq \bar x_r z_{rn},
    }{}{
    ~ \forall (r,n) \in \set{R} \times \set{N},
    \label{P:coninv}
    }
    \addConstraint{
        x^\mathrm{tot}_{rn} = \sum_{i \in \set{I}_r} x^0_{rni} + \sum_{i = \ubar n(n, N_r)}^n x_{ri},
    }{}{
    ~ \forall (r,n) \in \set{R} \setminus \{\mathrm{g}\} \times \set{N},
    \label{P:concap1}
    }
    \addConstraint{
        x^\mathrm{tot}_{\mathrm{g}nc} = \sum_{i \in \set{I}_\mathrm{g}} x^0_{\mathrm{g}ni} + \sum_{i = \ubar n(n, N_\mathrm{g})}^n x_{\mathrm{g}i} - c x^\mathrm{max}_{n},
    }{}{
    ~ \forall \{n,c\} \in \set{N} \times \set{C},
    \label{P:concap2}
    }
    \addConstraint{
        x^\mathrm{max}_{n} \geq x_{\mathrm{g}i},
    }{}{
    ~ \forall n \in \set{N}, \forall i \in \{\ubar n(n, N_\mathrm{g}), \ldots, n\},
    \label{P:conepi1}
    }
    \addConstraint{
    x^\mathrm{max}_{n} \geq  x^0_{\mathrm{g}ni},
    }{}{
    ~ \forall (n,i) \in \set{N} \times \set{I}_g,
    \label{P:conepi2}
    }
    \addConstraint{
    \sum_{r \in \set{R}} y^\mathrm{s}_{rnjkc} = \sum_{r \in \set{D}} y^\mathrm{d}_{rnjkc},
    }{}{
    ~ \forall (n,j,k,c) \in \set{N} \times \set{J} \times \set{K} \times \set{C},
    \label{P:conbalance}
    }
    \addConstraint{
    y^\mathrm{s}_{rnjkc} \leq x^\mathrm{tot}_{rn},
    }{}{
    ~\forall (r,n,j,k,c) \in \set{R} \setminus \{\mathrm{g}\} \times \set{N} \times \set{J} \times \set{K} \times \set{C},
    \label{P:consup}
    }
    \addConstraint{
    y^\mathrm{d}_{\mathrm{g}njkc} \leq x^\mathrm{tot}_{\mathrm{g}nc},~
    y^\mathrm{s}_{\mathrm{g}njkc} \leq x^\mathrm{tot}_{\mathrm{g}nc},
    }{}{
    ~ \forall (n,j,k,c) \in \set{N} \times \set{J} \times \set{K} \times \set{C},
    \label{P:congrid}
    }
    \addConstraint{
    y^\mathrm{d}_{\ell njkc} \leq \bar y_{\ell njk},~
    y^\mathrm{d}_{\mathrm{s} njkc} \leq x^\mathrm{tot}_{\mathrm{s}n},
    }{}{
    ~ \forall (n,j,k,c) \in \set{N} \times \set{J} \times \set{K} \times \set{C},
    \label{P:condem}
    }
    \addConstraint{
    0 \leq y^0_n \leq T^\mathrm{s} x^\mathrm{tot}_{\mathrm{s}n},~
    0 \leq y_{njkc} \leq T^\mathrm{s} x^\mathrm{tot}_{\mathrm{s}n},
    }{}{
    ~ \forall (n, j, k, c) \in \set{N} \times \set{J} \times \set{K} \times \set{C},
    \label{P:cony0}
    }
    \addConstraint{
    y_{nj1c} = y^0_{n} 
    + \Delta t \big( \eta^c y^\mathrm{d}_{\mathrm{s}nj1c} - \frac{y^\mathrm{s}_{\mathrm{s}nj1c}}{\eta^\mathrm{d}}\big) ,
    }{}{
    ~ \forall (n,j,c) \in \set{N} \times \set{J} \times \set{C},
    \label{P:cony1}
    }
    \addConstraint{
    y_{njkc} = y_{nj(k-1)c} 
    + \Delta t \big( \eta^c y^\mathrm{d}_{\mathrm{s}njkc} - \frac{y^\mathrm{s}_{\mathrm{s}njkc}}{\eta^\mathrm{d}}\big) ,
    }{}{
    ~ \forall (n,j,k,c) \in \set{N} \times \set{J} \times \set{K} \setminus\{1\} \times \set{C},
    \label{P:cony+}
    }
    \addConstraint{
    y_{njKc} = y^0_n,
    }{}{
    ~ \forall (n,j,c) \in \set{N} \times \set{J} \times \set{C},
    \label{P:conter}
    }
    \addConstraint{
    \frac{\Delta t}{\eta^\mathrm{d}} \sum_{j \in \set{J}} \sum_{k \in \set{K}} y^\mathrm{s}_{\mathrm{s}njkc} \leq C^\mathrm{s} T^\mathrm{s} x^\mathrm{tot}_{\mathrm{s}n},
    }{}{
    ~\forall (n,c) \in \set{N} \times \set{C}.
    }
\end{mini!}

\subsubsection{Without load shedding}
Replace constraint~\eqref{P:condem} by
\begin{equation*}
    y^\mathrm{d}_{\ell njkc} = \bar y^\mathrm{d}_{\ell njk},
    ~~
    \forall (n,j,k,c) \in \set{N} \times \set{J} \times \set{K} \times \set{C}.
\end{equation*}

\subsubsection{With market participation constraints}
For any $(n,j,k,c) \in \set{N} \times \set{J} \times \set{K} \times \set{C}$, we limit the supply from non-grid resources to the shortfall of grid capacity from load, \ie,
\begin{equation}\label{eq:m_reformulated}
    \sum_{r \in \set{R} \setminus \{\mathrm{g}\}} y^\mathrm{s}_{rnjkc} \leq [y^\mathrm{d}_{\ell njkc} - x^\mathrm{tot}_{\mathrm{g}nc}]^+.
\end{equation}
The difference to the original market participation constraint~\eqref{eq:m} is that we have replaced $\bar x_{\mathrm{g}nc}(\bm x_\mathrm{g})$ by the auxiliary variable $x^\mathrm{tot}_{\mathrm{g}nc}$. The case distinction in the $[\cdot]^+$ term can be handled with disjunctive constraints. We will determine the big-M parameters for these constraints under the following assumption.

\begin{Ass}\label{ass:buildout}
    The build-out of any supply resource is limited by local electricity demand.
\end{Ass}

Assumption~\ref{ass:buildout} is in line with the spirit of market participation constraints, separating electricity generation from distribution. To apply Assumption~\ref{ass:buildout}, we introduce the following lemma.

\begin{lem}\label{lem:bounds} For any optimization problem of the form
\begin{equation*}
    \min f_0(\bm x)~~\text{s.t.}~~\sum_{j \in \set{J}} x_j \geq x_0,~ \bm x \in \{\bm 0\} \cup [\ubar{ \bm x}, \bar{ \bm x}]^J,
\end{equation*}
    where $f_0$ is an increasing function, $x_0 \in \mathbb{R}$, $\ubar{\bm x} \in \mathbb{R}^J$, and $\bar{\bm x} \in \mathbb{R}^J$ are constants, and $\set{J}$ is a subset of~$\{1,\ldots,J\}$, all optimal solutions satisfy 
    \begin{equation*}
        \sum_{j \in \set{J}} x^\star_j < x_0 + \bar x.
    \end{equation*}
\end{lem}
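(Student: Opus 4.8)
The plan is to argue by contradiction using a one-step exchange (perturbation) argument that exploits the disjunctive structure $\{0\} \cup [\ubar x, \bar x]$ of each coordinate. Suppose $\bm x^\star$ is optimal but violates the claimed bound, i.e. $\sum_{j \in \set{J}} x^\star_j \geq x_0 + \bar x$. Since $\bar x > 0$ and $x_0 \geq 0$ in our setting, this sum is strictly positive, so at least one index $j \in \set{J}$ is active, meaning $x^\star_j > 0$ and therefore $x^\star_j \in [\ubar x, \bar x]$, whence in particular $x^\star_j \leq \bar x$.

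Next I would construct a strictly better feasible point by switching off this single active coordinate. Define $\bm x'$ to equal $\bm x^\star$ except that $x'_j = 0$. This $\bm x'$ remains feasible for the coordinate constraints because $0$ belongs to $\{0\} \cup [\ubar x, \bar x]$, and it still satisfies the demand constraint because $\sum_{i \in \set{J}} x'_i = \sum_{i \in \set{J}} x^\star_i - x^\star_j \geq (x_0 + \bar x) - \bar x = x_0$, using precisely that the dropped amount $x^\star_j$ never exceeds $\bar x$. Since $\bm x' \leq \bm x^\star$ componentwise with strict inequality in coordinate $j$, the monotonicity of $f_0$ gives $f_0(\bm x') < f_0(\bm x^\star)$, contradicting the optimality of $\bm x^\star$. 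Hence no optimal solution can have $\sum_{j \in \set{J}} x^\star_j \geq x_0 + \bar x$, which is the claim.

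The argument is short, and its delicate points are conceptual rather than computational. First, the key structural fact is that the disjunctive set $\{0\} \cup [\ubar x, \bar x]$ permits a full jump to zero: one cannot in general shrink a single coordinate continuously, since the minimum positive level $\ubar x$ blocks that, but one can always eliminate a unit entirely, and this is exactly the move that both preserves feasibility and strictly lowers the objective. Second, I rely on reading ``$f_0$ increasing'' as strictly increasing in each coordinate, which is what delivers the strict inequality for \emph{every} optimal solution rather than merely for some optimal solution; under only weak monotonicity the exchange would produce an equally good point but not a contradiction. I expect the main obstacle, such as it is, to be stating the feasibility of $\bm x'$ cleanly and making explicit the implicit sign conventions $\bar x > 0$ and $x_0 \geq 0$ that guarantee an active index exists, and to confirm the result holds vacuously when the problem is infeasible.
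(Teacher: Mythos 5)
Your proposal is correct and is essentially the paper's own proof: both argue by contradiction, zero out one nonzero component (which drops the sum by at most $\bar x$, preserving feasibility), and invoke the strict monotonicity of $f_0$ to contradict optimality. Your version merely spells out the implicit points (existence of an active index, the reading of ``increasing'' as strict) that the paper leaves tacit.
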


\begin{proof}
    We prove the claim by contradiction. Assume that there was an optimal solution $\bm x'$ such that $\sum_{j \in \set{J}} x'_j \geq x_0 + \bar x$. Set any nonzero component of $x'$ to zero. The modified solution is feasible because $\bm x' \leq \bar{\bm x}$, and has a better objective value because $f_0$ is increasing. Thus, the original solution cannot be optimal.
\end{proof}

We now state a mixed-integer linear reformulation of the market participation constraints.

\begin{Prop}\label{prop:supply_limit}
The supply limit can be modeled with auxiliary binary variables, \ie,
    \begin{align*}
    & \sum_{r \in \set{R} \setminus \{\mathrm{g}\}} y^\mathrm{s}_{rnjkc} \leq [y^\mathrm{d}_{\ell njkc} - x^\mathrm{tot}_{\mathrm{g}nc}]^+\\
    \iff & 
    \exists \, z^\mathrm{M}_{njkc} \in \{0,1\}: 
    \begin{cases}
        (1 - z^\mathrm{M}_{njkc}) \underline{M}_1 \leq  y^\mathrm{d}_{\ell njkc} - x^\mathrm{tot}_{\mathrm{g}nc} \leq z^\mathrm{M}_{njkc} \overline{M}_{1njkc} \\
        \sum_{r \in \set{R} \setminus \{\mathrm{g}\}} y^\mathrm{s}_{rnjkc} \leq y^\mathrm{d}_{\ell njkc} - x^\mathrm{tot}_{\mathrm{g}nc} - (1 - z^\mathrm{M}_{njkc}) \underline{M}_2 \\
        \sum_{r \in \set{R} \setminus \{\mathrm{g}\}} y^\mathrm{s}_{rnjkc} \leq z^\mathrm{M}_{njkc} \overline{M}_{2njk},
    \end{cases}
\end{align*}
where 
$-\underline{M}_1 = -\underline{M}_2 = \max\{ \bar y_\ell \} + 2 \bar x_\mathrm{g}$, 
$\overline{M}_{1njkc} = \bar y_{\ell njk} - \bar x_{\mathrm{g}nc}(\bm 0)$,
$\overline{M}_{2njk} = \bar y_{\ell njk}$, and~$\max\{\bar y_\ell\}$ returns the maximum electricity demand over all planning and operating periods. 
\end{Prop}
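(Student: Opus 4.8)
The nonconvex constraint $\sum_{r} y^s_r \le [y^d_\ell - x^{tot}_g]^+$ is equivalent to a mixed-integer linear system with a binary indicator $z^M$ and specified big-M constants.

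The plan is to prove the biconditional by the standard big-M disjunctive-programming argument, treating the $[\cdot]^+$ operator as a disjunction over the sign of $y^d_\ell - x^{tot}_g$ and verifying that the stated constants are large enough to deactivate the irrelevant branch while never cutting off feasible points. Throughout, write $D := y^d_{\ell njkc} - x^{tot}_{\mathrm{g}nc}$ and $S := \sum_{r\in\set R\setminus\{\mathrm g\}} y^s_{rnjkc}\ge 0$, so the target constraint is $S \le [D]^+ = \max\{0,D\}$.

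First I would handle the forward direction ($\Rightarrow$). Suppose $S \le [D]^+$. I split into the two cases that the $\max$ defines. If $D \ge 0$, set $z^M=1$; then the first line requires $0 \le D \le \overline M_{1njkc}$, the second line becomes $S \le D$ (which holds since $[D]^+=D$), and the third becomes $S \le \overline M_{2njk}=\bar y_{\ell njk}$, which holds because $S\le D = y^d_\ell - x^{tot}_g \le \bar y_{\ell njk}$. I must check the upper bound $D \le \overline M_{1njkc} = \bar y_{\ell njk} - \bar x_{\mathrm{g}nc}(\bm 0)$; this holds because $y^d_\ell \le \bar y_{\ell njk}$ by constraint~\eqref{P:condem} and $x^{tot}_{\mathrm gnc} \ge \bar x_{\mathrm{g}nc}(\bm 0)$ since capacity is nondecreasing in investment (Proposition~\ref{prop:xbar}). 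If $D < 0$, set $z^M=0$; then the first line requires $\underline M_1 \le D \le 0$, the second becomes $S \le D - \underline M_2$, and the third $S \le 0$, forcing $S=0=[D]^+$, consistent with the hypothesis. Feasibility of the first and second lines here is exactly where the magnitude of $\underline M_1=\underline M_2 = \max\{\bar y_\ell\}+2\bar x_{\mathrm g}$ matters: I must confirm $D \ge \underline M_1$ (i.e.\ $D$ is never more negative than $-(\max\{\bar y_\ell\}+2\bar x_{\mathrm g})$), which is where Assumption~\ref{ass:buildout} and the Remark's capacity bound $x^{tot}_{\mathrm gnc}\le \max\{\bar y_\ell\}+2\bar x_{\mathrm g}$ enter, giving $D = y^d_\ell - x^{tot}_{\mathrm gnc} \ge 0 - (\max\{\bar y_\ell\}+2\bar x_{\mathrm g}) = \underline M_1$.

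For the reverse direction ($\Leftarrow$), I assume the three inequalities hold for some $z^M\in\{0,1\}$ and recover $S\le[D]^+$ by again branching on the value of $z^M$. If $z^M=1$, the first line forces $D\ge 0$ so $[D]^+=D$, and the second line gives $S\le D - 0 = [D]^+$. If $z^M=0$, the first line forces $D\le 0$ so $[D]^+=0$, and the third line gives $S\le 0 = [D]^+$ (using $S\ge0$). Either way the original constraint holds. The main obstacle I anticipate is not the logical skeleton, which is routine, but rigorously justifying the two bound verifications in the forward direction, namely $\underline M_1 \le D$ in the negative branch and $D \le \overline M_{1njkc}$ in the nonnegative branch; these require carefully invoking Proposition~\ref{prop:xbar} (monotonicity of $\bar x_{\mathrm gnc}$), the load cap~\eqref{P:condem}, and the capacity upper bound derived from Assumption~\ref{ass:buildout} via Lemma~\ref{lem:bounds}. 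I would state these bounds explicitly as a short preliminary paragraph so that the case analysis then reads cleanly, and I would remark that $\overline M_{2njk}=\bar y_{\ell njk}$ suffices because $S$ is bounded by load whenever it is bounded by $[D]^+\le D\le\bar y_{\ell njk}$.
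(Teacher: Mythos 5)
Your proposal is correct and follows essentially the same route as the paper's proof: a case split on the sign of $y^\mathrm{d}_{\ell njkc} - x^\mathrm{tot}_{\mathrm{g}nc}$ for the forward direction and on $z^\mathrm{M}_{njkc}$ for the reverse, with the big-M constants justified by the load cap~\eqref{P:condem}, monotonicity of $\bar x_{\mathrm{g}nc}$, and the capacity bound from Assumption~\ref{ass:buildout} via Lemma~\ref{lem:bounds}. The only difference is presentational—the paper verifies the bounds by splitting relaxed optimization subproblems, whereas you state the same inequalities directly—so no substantive gap remains.
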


\begin{proof}[Proof of Proposition~\ref{prop:supply_limit}]
We first show the $(\implies)$ and then the $(\impliedby)$ direction.

Let $y^\mathrm{d}_{\ell njkc} - x^\mathrm{tot}_{\mathrm{g}nc}\geq0$, then the inequalities
$(1 - z^\mathrm{M}_{njkc}) \underline{M}_1 \leq  y^\mathrm{d}_{\ell njkc} - x^\mathrm{tot}_{\mathrm{g}nc} \leq \overline{M}_{1njkc} z^\mathrm{M}_{njkc}$ are valid if $z^\mathrm{M}_{njkc} = 1$ and
$\overline{M}_{1njkc} \geq y^\mathrm{d}_{\ell njkc} - x^\mathrm{tot}_{\mathrm{g}nc}$ for all feasible $y^\mathrm{d}_{\ell njkc}$ and $x^\mathrm{tot}_{\mathrm{g}nc}$. Thus,
\begin{align*}
    & \max \, y^\mathrm{d}_{\ell njkc} - x^\mathrm{tot}_{\mathrm{g}nc}~~\text{s.t.}~~(y^\mathrm{d}_{\ell njkc}, x^\mathrm{tot}_{\mathrm{g}nc})~~\text{feasible in \eqref{pb:P}} \\
    \leq~&\max \big\{ y^\mathrm{d}_{\ell njkc}~~\text{s.t.}~~
    y^\mathrm{d}_{\ell njkc}~~\text{feasible in \eqref{P:varsC}, \eqref{P:condem}}
    \big\} \\
    & - \min \big\{ x^\mathrm{tot}_{\mathrm{g}nc}~~\text{s.t.}~~x^\mathrm{tot}_{\mathrm{g}nc}~~\text{feasible in \eqref{P:varsR}, \eqref{P:coninv}, \eqref{P:concap2}}\big\} \\
    \leq~&\bar y_{\ell njk} - \bar x_{\mathrm{g}nc}(\bm 0) = \overline{M}_{1njkc},
\end{align*}
where the first inequality holds because we split the initial problem into two relaxed subproblems and the second inequality holds because $\bar x_{\mathrm{g}nc}$ is nondecreasing in~$\bm x_\mathrm{g}$ and $\bm x_\mathrm{g} \geq 0$. For the remaining inequalities, 
\begin{equation*}
    \sum_{r \in \set{R} \setminus \{\mathrm{g}\}} y^\mathrm{s}_{rnjkc} \leq y^\mathrm{d}_{\ell njkc} - x^\mathrm{tot}_{\mathrm{g}nc} - (1 - z^\mathrm{M}_{njkc}) \underline{M}_2
\end{equation*}
is implied by $\sum_{r \in \set{R} \setminus \{\mathrm{g}\}} y^\mathrm{s}_{rnjkc} \leq [y^\mathrm{d}_{\ell njkc} - x^\mathrm{tot}_{\mathrm{g}nc}]^+$ for $z^\mathrm{M}_{njkc} = 1$, and
\begin{equation*}
    \sum_{r \in \set{R} \setminus \{\mathrm{g}\}} y^\mathrm{s}_{rnjkc} \leq z^\mathrm{M}_{njkc} \overline{M}_{2njk},
\end{equation*}
holds for $z^\mathrm{M}_{njkc} = 1$ because
\begin{align*}
    \sum_{r \in \set{R}\setminus\{ \mathrm{g} \}} y^\mathrm{s}_{rnjkc} 
    \leq
    \big[ y^\mathrm{d}_{\ell njkc} - x^\mathrm{tot}_{\mathrm{g}nc} \big]^+ \leq \bar y_{\ell njk} = \overline{M}_{2njk}.
\end{align*}
The second inequality holds thanks to the constraints~\eqref{P:varsC} and~\eqref{P:condem}.

If $y^\mathrm{d}_{\ell njkc} - x^\mathrm{tot}_{\mathrm{g}nc}<0$, the inequalities $(1 - z^\mathrm{M}_{njkc}) \underline{M}_1 \leq  y^\mathrm{d}_{\ell njkc} - x^\mathrm{tot}_{\mathrm{g}nc} \leq \overline{M}_{1njkc} z^\mathrm{M}_{njkc}$ are valid for $z^\mathrm{M}_{njkc} = 0$ and
\begin{align*}
    & \min \, y^\mathrm{d}_{\ell njkc} - x^\mathrm{tot}_{\mathrm{g}nc}~~\text{s.t.}~~(y^\mathrm{d}_{\ell njkc}, x^\mathrm{tot}_{\mathrm{g}nc})~~\text{feasible in \eqref{pb:P}} \\
    \geq~& \min \big\{ y^\mathrm{d}_{\ell njkc}
    ~~\text{s.t.}~~ 
    y^\mathrm{d}_{\ell njkc}~~\text{feasible in \eqref{P:varsC}, \eqref{P:condem}}
    \big\} \\
    & - \max \big\{ x^\mathrm{tot}_{\mathrm{g}nc}~~\text{s.t.}~~x^\mathrm{tot}_{\mathrm{g}nc}~~\text{feasible in \eqref{P:varsR}, \eqref{P:coninv}, \eqref{P:concap2}}\big\} \\
    =~& 0 -\max \big\{ x^\mathrm{tot}_{\mathrm{g}nc}~~\text{s.t.}~~x^\mathrm{tot}_{\mathrm{g}nc}~~\text{feasible in \eqref{P:varsR}, \eqref{P:coninv}, \eqref{P:concap2}}\big\} \\
    \geq~& - \max\{\bar y_\ell\} - 2 \bar x_{\mathrm{g}} 
    =
    \underline{M}_1,
\end{align*}
where the inequality holds again because we split the initial problem into two relaxed subproblems and the second inequality follows from Lemma~\ref{lem:bounds}, which applies with constant $x_0 = \max\{ \bar y_\ell \} + \bar x_\mathrm{g}$ thanks to Assumption~\ref{ass:buildout}. For the remaining inequalities, 
\begin{equation*}
    \sum_{r \in \set{R} \setminus \{\mathrm{g}\}} y^\mathrm{s}_{rnjkc} \leq z^\mathrm{M}_{njkc} \overline{M}_{2njk},
\end{equation*}
is implied by $\sum_{r \in \set{R} \setminus \{\mathrm{g}\}} y^\mathrm{s}_{rnjkc} \leq [y^\mathrm{d}_{\ell njkc} - x^\mathrm{tot}_{\mathrm{g}nc}]^+$ for $z^\mathrm{M}_{njkc} = 0$, and
\begin{equation*}
    \sum_{r \in \set{R} \setminus \{\mathrm{g}\}} y^\mathrm{s}_{rnjkc} \leq y^\mathrm{d}_{\ell njkc} - x^\mathrm{tot}_{\mathrm{g}nc} - (1 - z^\mathrm{M}_{njkc}) \underline{M}_2
\end{equation*}
holds for $z^\mathrm{M}_{njkc} = 0$ because
\begin{align*}
    & \min \, y^\mathrm{d}_{\ell njkc} - x^\mathrm{tot}_{\mathrm{g}nc} - \sum_{r \in \set{R} \setminus \{\mathrm{g}\}} y^\mathrm{s}_{rnjkc}
    ~~\text{s.t.} ~~ (x^\mathrm{tot}_{\mathrm{g}nc}, y^\mathrm{d}_{\ell njkc}, y^\mathrm{s}_{\mathrm{b}njkc}, y^\mathrm{s}_{\mathrm{s}njkc})~~\text{feasible in \eqref{pb:P}} \\
    =~& \min \, y^\mathrm{d}_{\ell njkc} - x^\mathrm{tot}_{\mathrm{g}nc}~~\text{s.t.}~~(y^\mathrm{d}_{\ell njkc}, x^\mathrm{tot}_{\mathrm{g}nc})~~\text{feasible in \eqref{pb:P}} \\
    \geq~& - \max\{\bar y_\ell\} - 2 \bar x_{\mathrm{g}} 
    =
    \underline{M}_2,
\end{align*}
where the first equality holds because $0 \leq \sum_{r \in \set{R} \setminus \{\mathrm{g}\}} y^\mathrm{s}_{rnjkc} \leq [y^\mathrm{d}_{\ell njkc} - x^\mathrm{tot}_{\mathrm{g}nc}]^+ = 0$ and the inequality follows from the same reasoning as in the derivation for~$\underline{M}_2$.

We now prove the reverse implication. Let $z^\mathrm{M}_{njkc} = 1$, then the following constraints apply
\begin{equation*}
    0 \leq y^\mathrm{d}_{\ell njkc} - x^\mathrm{tot}_{\mathrm{g}nc} \leq \overline{M}_{1njkc},~
    \sum_{r \in \set{R} \setminus \{\mathrm{g}\}} y^\mathrm{s}_{rnjkc} \leq \min\left\{ y^\mathrm{d}_{\ell njkc} - x^\mathrm{tot}_{\mathrm{g}nc}, \, \overline{M}_{2njk} \right\}.
\end{equation*}
Following the same steps as in the first part of the proof, we see that the constraints involving $\overline{M}_{1njkc}$ and $\overline{M}_{2njk}$ are redundant. Thus,
\begin{equation*}
    z^\mathrm{M}_{njkc} = 1 \implies
    y^\mathrm{d}_{\ell njkc} - x^\mathrm{tot}_{\mathrm{g}nc} \geq 0, \,
    \sum_{r \in \set{R} \setminus \{\mathrm{g}\}} y^\mathrm{s}_{rnjkc} \leq y^\mathrm{d}_{\ell njkc} - x^\mathrm{tot}_{\mathrm{g}nc}.
\end{equation*}

Similarly, one can show that 
\begin{equation*}
    z^\mathrm{M}_{njkc} = 0 \implies y^\mathrm{d}_{\ell njkc} - x^\mathrm{tot}_{\mathrm{g}nc} \leq 0, \, \sum_{r \in \set{R} \setminus \{\mathrm{g}\}} y^\mathrm{s}_{rnjkc} \leq 0. \qedhere
\end{equation*}
\end{proof}
\setcounter{equation}{0}
\setcounter{Prop}{0}
\setcounter{table}{0}
\setcounter{figure}{0}
\section{Results}\label{apx:results}
{%
\centering
\tiny
\begin{longtable}{l|rrrrrrrrr}
    \caption{Case study results.} \label{tab:cs_results}\\
    \toprule
    \multicolumn{10}{c}{Experiments} \\
    \midrule
    Number & 1 & 2 & 3 & 4 & 5 & 6 & 7 & 8 & 9 \\
    \midrule
    \endhead
    \bottomrule
    \endlastfoot
    \multicolumn{7}{c}{Parameters} \\
    \midrule
    Market participation & Peak & $\cdots$ &  Full & $\cdots$ & $\cdots$ & $\cdots$ & $\cdots$ & Peak & Full \\
    Available investments & g & g+s & $\cdots$ & $\cdots$ & $\cdots$ & b+g+s & $\cdots$ & g+s & $\cdots$ \\
    Storage cost (\$/kWh) & na & 604 & $\cdots$ & $\cdots$ & $\cdots$ & $\cdots$ & $\cdots$ & 1 & $\cdots$ \\  
    Cycle limit & yearly & $\cdots$ & $\cdots$ &  $\cdots$ & daily & yearly & $\cdots$ & $\cdots$ & $\cdots$ \\
    Cap. price (\$/kW-month) & na & $\cdots$ & 0.000 & 3.064 & 0.000 & $\cdots$ & 3.064 & na & 0.000 \\
    \midrule
    \multicolumn{10}{c}{Solution quality} \\
    \midrule
    Total cost (M\$) & \textbf{678.874} & \textbf{647.026} & \textbf{633.441} & \textbf{616.688} & \textbf{659.310} & \textbf{629.823} & \textbf{599.411} & \textbf{609.227} & \textbf{432.174} \\
    Solve time (s) & 52.848 & 4,211.116 & 4,083.086 & 2,374.322 & \textbf{886.687} & 17,631.037 & 20,510.171 & 14,402.060 & 651.564 \\
    Maximum MIP gap (\%) & 0.001 & $\cdots$ & 0.000 & $\cdots$ & $\cdots$ & $\cdots$ & $\cdots$ & 0.004 & 0.000 \\
    \midrule
    \multicolumn{10}{c}{Costs (M\$)} \\
    \midrule
    Total operating & 331.384 & \textbf{331.245} & \textbf{317.661} & $\cdots$ & 313.509 & 331.632 & 331.633 & 331.065 & 151.047 \\
    - base case & 331.465 & $\cdots$ & 317.635 & $\cdots$ & 313.446 & 319.105 & $\cdots$ & 331.465 & 137.308  \\
    - contingeny & 331.064 & 330.367 & 317.763 & $\cdots$ & \textbf{313.762} & \textbf{381.742} & $\cdots$ & 329.466 & 205.999 \\
    Total capital & \textbf{347.490} & \textbf{315.781} & $\cdots$ & $\cdots$ & \textbf{345.801} & \textbf{298.191} & $\cdots$ & 278.162 & 281.127 \\
    - backup & na & $\cdots$ & $\cdots$ & $\cdots$ & $\cdots$ & \textbf{89.451} & $\cdots$ & na & $\cdots$  \\
    - grid & 347.490 & 277.992 & $\cdots$ & $\cdots$ & $\cdots$ & \textbf{183.222} & $\cdots$ & 277.992 & $\cdots$ \\
    - storage & na & 37.789 & $\cdots$ & $\cdots$ & \textbf{67.809} & 25.518 & $\cdots$ & 0.170 & 3.135 \\
    Total capacity payment & na & $\cdots$ & 0.000 & \textbf{-16.753} & 0.000 & $\cdots$ & \textbf{-30.413} & na & 0.000 \\
    - backup & na & $\cdots$ & 0.000 & -7.126 & 0.000 & $\cdots$ & \textbf{-23.138} & na & 0.000 \\
    - grid & na & $\cdots$ & $\cdots$ & $\cdots$ & $\cdots$ & $\cdots$ & $\cdots$ & $\cdots$ & $\cdots$ \\
    - storage & na & $\cdots$ & 0.000 & -9.627 & 0.000 & $\cdots$ & -7.275 & na & 0.000 \\
    \midrule
    \multicolumn{10}{c}{Investment decisions (MW)} \\
    \midrule
    Terminal capacity & 160.000 & 139.318 & $\cdots$ & $\cdots$ & 157.900 & 138.084 & $\cdots$ & 139.289 & 367.921 \\
    - backup & 0.000 & $\cdots$ & $\cdots$ & $\cdots$ & $\cdots$ & 6.513 & $\cdots$ & 0.000 & $\cdots$ \\
    - grid & 160.000 & 120.000 & $\cdots$ & $\cdots$ & $\cdots$ & $\cdots$ & $\cdots$ & $\cdots$ & $\cdots$ \\
    - storage & 0.000 & 19.318 & $\cdots$ & $\cdots$ & 37.900 & 11.571 & $\cdots$ & 19.289 & 247.921 \\
    Total investment & 160.000 & 139.318 & $\cdots$ & $\cdots$ & 157.900 & 153.832 & $\cdots$ & 141.290 & 511.921 \\
    - backup & na & $\cdots$ & $\cdots$ & $\cdots$ & $\cdots$ & 22.261 & $\cdots$ & na & $\cdots$ \\
    - grid & 160.000 & 120.000 & $\cdots$ & $\cdots$ & $\cdots$ & $\cdots$ & $\cdots$ & $\cdots$ & $\cdots$ \\
    - storage & na & \textbf{19.318} & $\cdots$ & $\cdots$ & \textbf{37.900} & \textbf{11.571} & $\cdots$ & \textbf{21.290} & \textbf{391.921} \\
    \midrule
    \multicolumn{10}{c}{Operating decisions: Base case (GWh/yr)} \\
    \midrule
    Demand (w/o storage) & 295.000 & $\cdots$ & $\cdots$ & $\cdots$ & 295.249 & 295.012 & $\cdots$ & 295.000 & 483.825 \\
    - grid & 0.000 & $\cdots$ & $\cdots$ & $\cdots$ & \textbf{0.248} & 0.012 & $\cdots$ & 0.000 & 188.825 \\
    - load & 295.000 & $\cdots$ & $\cdots$ & $\cdots$ & $\cdots$ & $\cdots$ & $\cdots$ & $\cdots$ & $\cdots$ \\
    - storage & 0.000 & $\cdots$ & 13.237 & $\cdots$ & 20.358 & 10.003 & $\cdots$ & 0.000 & 307.816 \\
    Supply (w/o storage) & 295.000 & $\cdots$ & 297.203 & $\cdots$ & 298.637 & 296.677 & $\cdots$ & 295.000 & 535.026 \\
    - backup & 0.000 & $\cdots$ & 0.075 & $\cdots$ & $\cdots$ & 0.242 & $\cdots$ & 0.000 & 0.029 \\
    - grid & 295.000 & $\cdots$ & 297.129 & $\cdots$ & 298.562 & 296.435 & $\cdots$ & 295.000 & 535.026 \\
    - storage & 0.000 & $\cdots$ & \textbf{11.034} & $\cdots$ & 16.970 & 8.338 & $\cdots$ & 0.000 & 256.586\\
    \midrule
    \multicolumn{10}{c}{Operating decisions: Contingency (GWh/yr)} \\
    \midrule
    Demand (w/o storage) & 295.000 & $\cdots$ & $\cdots$ & $\cdots$ & 295.249 & 295.012 & $\cdots$ & 295.000 & 405.713\\
    - grid & 0.000 & $\cdots$ & $\cdots$ & $\cdots$ & 0.248 & 0.012 & $\cdots$ & 0.000 & 110.713 \\
    - load & 295.000 & $\cdots$ & $\cdots$ & $\cdots$ & $\cdots$ & $\cdots$ & $\cdots$ & $\cdots$ & $\cdots$ \\
    - storage & 0.239 & 0.597 & 13.237 & $\cdots$ & 20.351 & 10.003 & $\cdots$ & 1.256 & 202.620 \\
    Supply (w/o storage) & 295.040 & 295.100 & 297.203 & $\cdots$ & 298.636 & 296.677 & $\cdots$ & 295.209 & 439.435 \\
    - backup & 0.028 & $\cdots$ & 0.075 & $\cdots$ & $\cdots$ & \textbf{9.431} & $\cdots$ & 0.010 & 0.068 \\
    - grid & 295.012 & 295.072 & 297.128 & $\cdots$ & 298.560 & 287.246 & $\cdots$ & 295.200 & 439.367 \\
    - storage & 0.199 & 0.498 & \textbf{11.034} & $\cdots$ & 16.964 & 8.338 & $\cdots$ & 1.047 & 168.897 \\
    \midrule
    \multicolumn{10}{c}{Yearly discharge cycles (\#)} \\
    \midrule
    Average (base case) & 0.000 & $\cdots$ & 150.000 & $\cdots$ & $\cdots$ & $\cdots$ & $\cdots$ & 0.000 & 145.295 \\
    Maximum (base case) & 0.000 & $\cdots$ & 150.000 & $\cdots$ & $\cdots$ & $\cdots$ & $\cdots$ & 0.000 & 150.000 \\
    Average (contingency) & 4.551 & 7.051 & 150.000 & $\cdots$ & $\cdots$ & $\cdots$ & $\cdots$ & 7.622 & 105.870 \\ 
    Maximum (contingency)  & 8.656 & $\cdots$ & \textbf{150.000} & $\cdots$ & $\cdots$ & $\cdots$ & $\cdots$ & 9.057 & 150.000 \\
    \midrule
    \multicolumn{10}{c}{Minimum scarcity supply ratio (-)} \\
    \midrule
    Backup (base case) & 0.000 & $\cdots$ & 1.000 & $\cdots$ & $\cdots$ & $\cdots$ & $\cdots$ & 0.000 & $\cdots$ \\
    Storage (base case) & 0.000 & $\cdots$ & 1.000 & $\cdots$ & $\cdots$ & $\cdots$ & $\cdots$ & 0.000 & 0.483 \\
    Backup (contingency) & 0.000 & $\cdots$ & 1.000 & $\cdots$ & $\cdots$ & $\cdots$ & $\cdots$ & 0.000 & $\cdots$ \\ 
    Storage (contingency) & 0.000 & $\cdots$ & \textbf{1.000} & $\cdots$ & \textbf{0.477} & 1.000 & $\cdots$ & 0.000 & 0.012 \\
\end{longtable}
}%
\input{pics/investment}
\newpage
\addcontentsline{toc}{section}{References (Appendix)}
\small
\linespread{1}\selectfont
\putbib[_mybib]
\end{bibunit}

\begin{thebibliography}{32}
\providecommand{\natexlab}[1]{#1}
\providecommand{\url}[1]{\texttt{#1}}
\expandafter\ifx\csname urlstyle\endcsname\relax
  \providecommand{\doi}[1]{doi: #1}\else
  \providecommand{\doi}{doi: \begingroup \urlstyle{rm}\Url}\fi

\bibitem[Anderson(1972)]{anderson1972cep}
D.~Anderson.
\newblock Models for determining least-cost investments in electricity supply.
\newblock \emph{The Bell Journal of Economics and Management Science}, 3\penalty0 (1):\penalty0 267--299, 1972.

\bibitem[Aniti(2024)]{eia2024gridinvestment}
L.~Aniti.
\newblock {Grid infrastructure investments drive increase in utility spending over last two decades}.
\newblock \url{https://www.eia.gov/todayinenergy/detail.php?id=63724}, 2024.
\newblock US Energy Information Administration.

\bibitem[Bae et~al.(2025)Bae, Kapuscinski, and Silberholz]{bae2025stacking}
J.~Bae, R.~Kapuscinski, and J.~Silberholz.
\newblock {Cost-saving synergy: Energy stacking in battery energy storage systems}.
\newblock \emph{Management Science}, 2025.
\newblock Forthcoming.

\bibitem[Balducci et~al.(2019)Balducci, Alam, McDermott, Fotedar, Ma, Wu, Bhatti, Mongird, Bhattarai, Crawford, and Ganguli]{balducci2019nantucket}
P.~Balducci, J.~Alam, T.~McDermott, V.~Fotedar, X.~Ma, D.~Wu, B.~Bhatti, K.~Mongird, B.~Bhattarai, A.~Crawford, and S.~Ganguli.
\newblock Nantucket island energy storage system assessment, 2019.
\newblock Pacific Northwest National Laboratory.

\bibitem[Bertsimas and Tsitsiklis(1997)]{bertsimas1997introduction}
D.~Bertsimas and J.~N. Tsitsiklis.
\newblock \emph{Introduction to Linear Optimization}.
\newblock Athena Scientific, 1997.

\bibitem[Blair and Jeroslow(1977)]{blair1977value}
C.~E. Blair and R.~G. Jeroslow.
\newblock The value function of a mixed integer program: I.
\newblock \emph{Discrete Mathematics}, 19\penalty0 (2):\penalty0 121--138, 1977.

\bibitem[Borenstein and Bushnell(2015)]{borenstein2015restructuring}
S.~Borenstein and J.~Bushnell.
\newblock {The US electricity industry after 20 years of restructuring}.
\newblock \emph{Annual Review of Economics}, 7:\penalty0 437--63, 2015.

\bibitem[Boyd and Vandenberghe(2004)]{SB04}
S.~P. Boyd and L.~Vandenberghe.
\newblock \emph{Convex Optimization}.
\newblock Cambridge University Press, 2004.

\bibitem[Brown et~al.(2025)Brown, H{\"o}rsch, Hofmann, Neumann, Zeyen, Syranidis, Frysztacki, Schlachtberger, Glaum, and Parzen]{pypsa}
T.~Brown, J.~H{\"o}rsch, F.~Hofmann, F.~Neumann, L.~Zeyen, C.~Syranidis, M.~Frysztacki, D.~Schlachtberger, P.~Glaum, and M.~Parzen.
\newblock {PyPSA: Python for power system analysis (v1.0.0rc1)}.
\newblock Zenodo \url{https://doi.org/10.5281/zenodo.16883512}, 2025.

\bibitem[{European Commission}(2023)]{eu2023storage}
{European Commission}.
\newblock {Commission Recommendation on 14 March 2023 on energy storage--Underpinning a decarbonised and secure EU energy system}.
\newblock \emph{Official Journal of the European Union}, C\penalty0 (103):\penalty0 1--5, 2023.

\bibitem[{European Parliament and the Council of the European Union}(2019)]{eu2019storage}
{European Parliament and the Council of the European Union}.
\newblock {Directive (EU) 2019/944 of the European Parliament and of the Council of 5 June 2019 on common rules for the internal market for electricity and amending Directive 2012/27/EU}.
\newblock \emph{Official Journal of the European Union}, 158:\penalty0 125--199, 2019.

\bibitem[{European Union Agency for the Cooperation of Energy Regulators}(2025)]{acer2025drnc}
{European Union Agency for the Cooperation of Energy Regulators}.
\newblock {Recommendation No 01/2025 -- Annex 1 Amended demand response network code}.
\newblock \url{https://www.acer.europa.eu/sites/default/files/documents/Recommendations_annex/ACER_Recommendation_01-2025_DR_NC-Annex1_Amended_DR_NC.pdf}, 2025.

\bibitem[Joskow(2008)]{joskow2008lessons}
P.~L. Joskow.
\newblock Lessons learned from electricity market liberalization.
\newblock \emph{The Energy Journal}, 29\penalty0 (2\_suppl):\penalty0 9--42, 2008.

\bibitem[Kaps and Netessine(2025)]{kaps2025residential}
C.~Kaps and S.~Netessine.
\newblock {Residential battery storage—Reshaping the way we do electricity}.
\newblock \emph{Operations Research}, 2025.
\newblock Forthcoming.

\bibitem[Keen et~al.(2022)Keen, Giraldez, Cook, Eiden, Placide, Hirayama, Monson, Mino, and Eldali]{keen2022distribution}
J.~Keen, J.~Giraldez, E.~Cook, A.~Eiden, S.~Placide, A.~Hirayama, B.~Monson, D.~Mino, and F.~Eldali.
\newblock {Distribution capacity expansion planning: Current practice, opportunities, and decision support}, 2022.
\newblock National Renewable Energy Laboratory.

\bibitem[Lauinger et~al.(2025)Lauinger, Cot{\'e}, and Sun]{lauinger2025storage}
D.~Lauinger, L.~Cot{\'e}, and A.~Sun.
\newblock Storage participation in electricity markets: Arbitrage and ancillary services.
\newblock 2025.
\newblock Preprint available at arXiv:2510.10856.

\bibitem[L{\"o}hndorf and Wozabal(2023)]{lohndorf2023value}
N.~L{\"o}hndorf and D.~Wozabal.
\newblock The value of coordination in multimarket bidding of grid energy storage.
\newblock \emph{Operations Research}, 71\penalty0 (1):\penalty0 1--22, 2023.

\bibitem[{Lumen Energy Strategy}(2024)]{lumen2024scaleup}
{Lumen Energy Strategy}.
\newblock {California Public Utilities Commission Scaling up and crossing bounds: Energy storage in California}.
\newblock \url{https://lumenenergystrategy.com/energystorage.html}, 2024.

\bibitem[Martin et~al.(2019)Martin, Feron, De~Jaeger, Glineur, and Monti]{martin2019peak}
B.~Martin, B.~Feron, E.~De~Jaeger, F.~Glineur, and A.~Monti.
\newblock Peak shaving: a planning alternative to reduce investment costs in distribution systems?
\newblock \emph{Energy Systems}, 10\penalty0 (4):\penalty0 871--887, 2019.

\bibitem[Mart{\'\i}nez et~al.(2024)Mart{\'\i}nez, Mateo, G{\'o}mez, Alonso, and Fr{\'\i}as]{martinez2024distributed}
M.~Mart{\'\i}nez, C.~Mateo, T.~G{\'o}mez, B.~Alonso, and P.~Fr{\'\i}as.
\newblock Distributed battery energy storage systems for deferring distribution network reinforcements under sustained load growth scenarios.
\newblock \emph{Journal of Energy Storage}, 100:\penalty0 113404, 2024.

\bibitem[{Massachusetts General Court}(1997)]{ma1997act}
{Massachusetts General Court}.
\newblock {Chapter 164 of the Acts of 1997: An Act relative to restructuring the electric utility industry in the Commonwealth, regulating the provision of electricity and other services, and promoting enhanced consumer protections therein}.
\newblock \url{https://malegislature.gov/Laws/SessionLaws/Acts/1997/Chapter164}, 1997.

\bibitem[{Massachusetts General Court}(2022)]{ma2022act}
{Massachusetts General Court}.
\newblock {Chapter 179 of the Acts of 2022: An Act driving clean energy and offshore wind}.
\newblock \url{https://malegislature.gov/Laws/SessionLaws/Acts/2022/Chapter179}, 2022.

\bibitem[{MIT Energy Initiative and Princeton University ZERO Lab}()]{genx}
{MIT Energy Initiative and Princeton University ZERO Lab}.
\newblock {GenX: A configurable power system capacity expansion model for studying low-carbon energy futures}.
\newblock \url{https://github.com/GenXProject/GenX}.
\newblock Accessed on 18 August 2025.

\bibitem[{National Grid}(2024)]{nationalgrid2024ESMP}
{National Grid}.
\newblock {Future Grid Plan: Empowering Massachusetts by building a smarter, stronger, cleaner and more equitable energy future}.
\newblock \url{https://www.nationalgridus.com/media/pdfs/our-company/massachusetts-grid-modernization/future-grid-full-plan.pdf}, 2024.

\bibitem[{Orange and Rockland Utilities}(2024)]{or2024storage}
{Orange and Rockland Utilities}.
\newblock {Pomona battery energy storage system annual report}.
\newblock \url{https://documents.dps.ny.gov/public/Common/ViewDoc.aspx?DocRefId={A02FBC93-0000-C61E-B03D-1326F1FB2BBA}}, 2024.

\bibitem[Peng et~al.(2024)Peng, Wu, and Souza]{peng2024renewable}
X.~Peng, O.~Q. Wu, and G.~C. Souza.
\newblock {Renewable, flexible, and storage capacities: Friends or foes?}
\newblock \emph{Manufacturing \& Service Operations Management}, 26\penalty0 (5):\penalty0 1730--1749, 2024.

\bibitem[{Public Service Commission of Maryland}(2019)]{pscmd2019storage_pilot}
{Public Service Commission of Maryland}.
\newblock Order no. 89240 establishing an energy storage pilot program.
\newblock \url{https://www.psc.state.md.us/wp-content/uploads/Order-No.-89240-Case-No.-9619-Order-Establishing-an-Energy-Storage-Pilot-Program.pdf}, 2019.

\bibitem[Qi et~al.(2015)Qi, Liang, and Shen]{qi2015joint}
W.~Qi, Y.~Liang, and Z.-J.~M. Shen.
\newblock Joint planning of energy storage and transmission for wind energy generation.
\newblock \emph{Operations Research}, 63\penalty0 (6):\penalty0 1280--1293, 2015.

\bibitem[Sotkiewicz and Vignolo(2006)]{sotkiewicz2006nodal}
P.~M. Sotkiewicz and J.~M. Vignolo.
\newblock {Nodal pricing for distribution networks: Efficient pricing for efficiency enhancing distributed generation}.
\newblock \emph{IEEE Transactions on Power Systems}, 21\penalty0 (2), 2006.

\bibitem[{State of New York Public Service Commission}(2021)]{nypsc2021storage}
{State of New York Public Service Commission}.
\newblock {Order approving utility-owned asset participation in electricity markets}.
\newblock \url{https://documents.dps.ny.gov/public/Common/ViewDoc.aspx?DocRefId={3813456F-44B2-4B2C-B8CA-A62E6CC4C857}}, 2021.

\bibitem[Wu et~al.(2023)Wu, Kapuscinski, and Suresh]{wu2023distributed}
O.~Q. Wu, R.~Kapuscinski, and S.~Suresh.
\newblock On the distributed energy storage investment and operations.
\newblock \emph{Manufacturing \& Service Operations Management}, 25\penalty0 (6):\penalty0 2277--2297, 2023.

\bibitem[Yi et~al.(2023)Yi, Cherkaoui, Paolone, Shchetinin, and Knezovic]{yi2022optimal}
J.~H. Yi, R.~Cherkaoui, M.~Paolone, D.~Shchetinin, and K.~Knezovic.
\newblock {Optimal co-planning of ESSs and line reinforcement considering the dispatchability of active distribution networks}.
\newblock \emph{IEEE Transactions on Power Systems}, 38\penalty0 (3):\penalty0 2485--2499, 2023.

\end{thebibliography}


\begin{thebibliography}{16}
\providecommand{\natexlab}[1]{#1}
\providecommand{\url}[1]{\texttt{#1}}
\expandafter\ifx\csname urlstyle\endcsname\relax
  \providecommand{\doi}[1]{doi: #1}\else
  \providecommand{\doi}{doi: \begingroup \urlstyle{rm}\Url}\fi

\bibitem[Balducci et~al.(2019)Balducci, Alam, McDermott, Fotedar, Ma, Wu, Bhatti, Mongird, Bhattarai, Crawford, and Ganguli]{balducci2019nantucket}
P.~Balducci, J.~Alam, T.~McDermott, V.~Fotedar, X.~Ma, D.~Wu, B.~Bhatti, K.~Mongird, B.~Bhattarai, A.~Crawford, and S.~Ganguli.
\newblock Nantucket island energy storage system assessment, 2019.
\newblock Pacific Northwest National Laboratory.

\bibitem[Basciftci et~al.(2024)Basciftci, Ahmed, and Gebraeel]{basciftci2024adaptive}
B.~Basciftci, S.~Ahmed, and N.~Gebraeel.
\newblock Adaptive two-stage stochastic programming with an analysis on capacity expansion planning problem.
\newblock \emph{Manufacturing \& Service Operations Management}, 26\penalty0 (6):\penalty0 2121--2141, 2024.

\bibitem[{Bluedot Living}(2024)]{bluedotliving2024nantucket}
{Bluedot Living}.
\newblock {National Grid: Electric power on Nantucket}.
\newblock \url{https://bluedotliving.com/field-note-electric-power-on-nantucket/}, 2024.
\newblock Accessed on 22 August 2025.

\bibitem[Cole and Frazier(2019)]{nrel2019update}
W.~Cole and A.~W. Frazier.
\newblock {Cost projections for utility-scale battery storage}.
\newblock \url{https://www.osti.gov/servlets/purl/1529218}, 2019.

\bibitem[Cole and Frazier(2020)]{nrel2020update}
W.~Cole and W.~Frazier.
\newblock {Cost projections for utility-scale battery storage: 2020 Update}.
\newblock \url{https://docs.nrel.gov/docs/fy20osti/75385.pdf}, 2020.

\bibitem[Cole and Karmakar(2023)]{nrel2023update}
W.~Cole and A.~Karmakar.
\newblock {Cost projections for utility-scale battery storage: 2023 Update}.
\newblock \url{https://docs.nrel.gov/docs/fy23osti/85332.pdf}, 2023.

\bibitem[Cole et~al.(2021)Cole, Frazier, and Augustine]{nrel2021update}
W.~Cole, W.~Frazier, and C.~Augustine.
\newblock {Cost projections for utility-scale battery storage: 2021 Update}.
\newblock \url{https://docs.nrel.gov/docs/fy21osti/79236.pdf}, 2021.

\bibitem[Cole et~al.(2025)Cole, Ramasamy, and Turan]{nrel2025update}
W.~Cole, V.~Ramasamy, and M.~Turan.
\newblock {Cost projections for utility-scale battery storage: 2025 Update}.
\newblock \url{https://docs.nrel.gov/docs/fy25osti/93281.pdf}, 2025.

\bibitem[{Eversource Energy}(2024)]{eversource2024gridmodernization}
{Eversource Energy}.
\newblock {2023 Grid Modernization Annual Report}.
\newblock \url{https://www.mass.gov/info-details/grid-modernization-and-ami-filings-and-reports#3-eversource}, 2024.

\bibitem[Gheorghiu(2019)]{gheorghiu2019nationalgrid}
I.~Gheorghiu.
\newblock {There once was a 48MWh Tesla battery on Nantucket, which saved National Grid \$120M in its budget}.
\newblock \emph{Utility Dive}, 2019.

\bibitem[{National Grid}(2024)]{nationalgrid2024ESMP}
{National Grid}.
\newblock {Future Grid Plan: Empowering Massachusetts by building a smarter, stronger, cleaner and more equitable energy future}.
\newblock \url{https://www.nationalgridus.com/media/pdfs/our-company/massachusetts-grid-modernization/future-grid-full-plan.pdf}, 2024.

\bibitem[{Orange and Rockland Utilities}(2021)]{or2021cost}
{Orange and Rockland Utilities}.
\newblock {Ponoma distributed energy resources quarterly expenditures and program report Q1 - 2021}, 2021.

\bibitem[{Potomac Economics}(2022)]{potomac2022isonemonitor}
{Potomac Economics}.
\newblock {2021 Assessment of the ISO New England electricity markets}.
\newblock \url{https://www.potomaceconomics.com/wp-content/uploads/2022/06/ISO-NE-2021-SOM-Report_Full-Report_Final-Clean.pdf}, 2022.

\bibitem[{Potomac Economics}(2025)]{potomac2025isonemonitor}
{Potomac Economics}.
\newblock {2024 Assessment of the ISO New England electricity markets}.
\newblock \url{https://www.potomaceconomics.com/wp-content/uploads/2025/06/ISO-NE-2024-EMM-Annual-Report_Final.pdf}, 2025.

\bibitem[Seel and Mills(2021)]{seel2021cambium}
J.~Seel and A.~Mills.
\newblock {Integrating Cambium marginal costs into electric sector decisions: Opportunities to integrate Cambium cost data into Berkeley Lab Analysis and Technical Assistance}.
\newblock \url{https://doi.org/10.2172/1828856}, 2021.

\bibitem[{The Martha's Vineyard Times}(2023)]{marthasvineyard2023faulty}
{The Martha's Vineyard Times}.
\newblock Eversource submarine cable projects in the works.
\newblock \url{https://www.mvtimes.com/2023/11/06/eversource-submarine-cable-projects-works/}, 2023.
\newblock November 6.

\end{thebibliography}
\end{document}